\newcommand{\fracd}[2]{\displaystyle{\frac{{\displaystyle{#1}}}{{\displaystyle{#2}}}}}
\newcommand{\bd}{\boldsymbol{u}}
\newcommand{\JJ}{\boldsymbol{J}}
\newcommand{\J}{J}
\newcommand{\RP}{\mathrm{RP}}
\newcommand{\phic}{\phi^{\mathrm{\star}}} 
\newcommand{\Phic}{\Phi^{\mathrm{\star}}} 
\newcommand{\phis}{\phi^{\mathrm{x}}} 
\newcommand{\Phis}{\Phi^{\mathrm{x}}} 
\newcommand{\Phimax}{\Phi^{\infty}} 
\newcommand{\phimax}{\phi^{\infty}} 
\newcommand{\Phisigma}{\Phi^{\sigma}} 
\newcommand{\phisigma}{\phi^{\sigma}} 
\newcommand{\Phiinf}{\Phi^{\diamond}} 
\newcommand{\phiinf}{\phi^{\diamond}} 
\newcommand{\phiu}{\phi^{u}} 
\newcommand{\uu}{\mbox{{\boldmath $u$}}}
\newtheorem{definition}{Definition}
\newtheorem{lemma}{Lemma}
\newtheorem{theorem}{Theorem}
\newtheorem{corollary}{Corollary}
\journal{Journal of Differential Equations}
\begin{document}

\begin{frontmatter}



\title{Contact Manifolds in a Hyperbolic System \\ of Two Nonlinear Conservation Laws}


\author[uct]{Stefan Berres}
\address[uct]{Departamento de Ciencias Matem\'{a}ticas y F\'{\i}sicas, Facultad de Ingenier\'{i}a, \\
         Universidad Cat\'olica de Temuco,
         Temuco, Chile.}

\author[itam]{Pablo Casta{\~n}eda}
\address[itam]{Instituto Tecnol\'ogico Aut\'onomo de M\'exico \\
         R\a'io Hondo No. 1, Col. Progreso Tizap\'an,
         M\'exico D.F. 01080, M\'exico. } 

\begin{abstract}

This paper deals with a hyperbolic system of two nonlinear conservation laws, where the phase space contains two contact manifolds.
The governing equations are modelling bidisperse suspensions, which consist of two types of small particles 
that are dispersed in a viscous fluid
and differ in size and viscosity.
%
%
For certain parameter choices quasi-umbilic points and a contact manifold in the interior of the phase space are detected.
%
%
%
%
%
%
%
%
%
The dependance of the solutions structure on this contact manifold is examined.
The elementary waves that start in the origin of the phase space are classified. 
Prototypic Riemann problems that connect the origin to any point in the state space 
and that connect any state in the state space to the maximum line 
are solved semi-analytically. 

\end{abstract}

\begin{keyword}

System of nonlinear conservation laws; Quasi-umbilic point; 
Contact manifold;  
Hugoniot locus;
Riemann problem 

\MSC 35L45 \sep 76T30
	




\end{keyword}

\end{frontmatter}



\section{Introduction} \label{sec:intro}


Polydisperse suspensions can be described by balance equations as $N$ superimposed continuous phases, where particles of species $i$,
associated with a volume fraction $\phi_i$, distinguish in properties like size, density and viscosity~\cite{bbkt,bktw}; 
models with similar solution structure describe traffic and pedestrian flows \cite{bgc03,nhm11}.
For the considered model of bidisperse suspension, the solution structure of the solution to the initial value problem
 for standard batch settling tests
has been studied for the cases when strict hyperbolicity is assured~\cite{bbrp} and when the phase space provides elliptic regions~\cite{bbacta}. 
The focus of this contribution is on the impact of a contact manifold in the interior of the phase space, that emerges for certain parameter settings.
This contact manifold has the physical property of coinciding particle settling velocities, and thus has a practical relevance, since one general goal in the process control of solid-liquid separation processes is to reduce segregation effects~\cite{biesheuvel00}. 

The generic form of kinematic sedimentation models for polydisperse
suspensions consists of the system of $N$ first-order hyperbolic equations
\begin{align} \label{systeq}
    \partial_t \phi_i + \partial_x f_i(\Phi) = 0, \quad f_i(\Phi) = \phi_i
    v_i (\Phi) , \quad i=1, \dots, N ,
\end{align}
where $t$ is time, $x$ is depth and the velocity components $v_i (\Phi)$ depend on the
concentration vector $\Phi = (\phi_1,\dots,\phi_N)^{\mathrm{T}}$.
%
The unknown $\Phi$
denotes the vector of volume fractions of the solids phases
and is contained by the phase space of physically relevant concentrations
\begin{eqnarray}
\mathcal{D}_{\Phimax} := \bigg\{
 \Phi = (\phi_1, \dots, \phi_N)^{\mathrm{T}} \in
 \mathbb{R}^N:
 \begin{array}{l}
\phi_1 \geq 0, \ldots, \, \phi_N \geq 0,  \\
 \phi := \phi_1 + \ldots + \phi_N \leq \phimax 
 \end{array}
 \bigg\},
\label{phasespace}
\end{eqnarray} 
where the total concentration $\phi := \phi_1 + \ldots + \phi_N$ is bounded from above by the maximum packing
concentration $\phimax$. The maximum packing manifold 
\begin{align}
\label{maxphi}
    \partial^{\infty} := \{ \Phi = (\phi_1, \dots, \phi_N)^{\mathrm{T}} : \phi := \phi_1+ \dots + \phi_N=\phimax \} .
\end{align}
is the set of all maximal states.



 
%
The resulting system of conservation laws is actually
a system of mass balances for different solids species, 
where the nonlinear flux function 
\begin{align*}
	\boldsymbol{f}(\Phi) := (f_1(\Phi), \dots, f_N(\Phi))^{\mathrm{T}}
\end{align*}
can be derived from the corresponding momentum balances \cite{bbkt, bktw, masl}
The components describe the flow process of the dispersed solids
phases in a liquid, where the dispersed phases are considered as a continuum.
%
The flux function has components
\begin{align} \label{fluxfunction}
  f_i(\Phi) = \phi_i v_i(\Phi), \qquad
  v_i(\Phi) = u_i(\Phi) - \Phi^{\mathrm{T}} \uu, \;
\quad i=1, \dots, N,
\end{align}
with $\uu = (u_1(\Phi), \dots, u_N(\Phi))^{\mathrm{T}}$, 
where the absolute velocity
$v_i =v_i(\Phi)$
of a representative solids particle
depends on a linear combination of
 the solid-fluid relative (``slip'') velocities
\begin{align} \label{relvel}
    u_i(\Phi) := v_i(\Phi) - v_{\mathrm{f}},
\end{align}
which are relative to the fluid velocity
$v_{\mathrm{f}}$.
The flux function model
\eqref{fluxfunction} is closed by specifying the relative velocity as
\begin{align} \label{relvelcon}
    u_i(\Phi) = v_{\infty i} V_i(\Phi) , 
\end{align}
where the constant $v_{\infty i}$ is the Stokes velocity, which
quantifies the settling velocity of a single particle in a fluid,
and $V_i(\Phi)$ is the hindered-settling velocity 
that is an non-increasing function of the components of $\Phi$, see~\cite{bbb}.
Following Richardson and Zaki \cite{rz},
the hindered-settling velocity is set as
\begin{align}
\label{videf} V_i (\Phi) 
    := \begin{cases}
(1-\phi)^{n_i-1} & \text{if $0 \leq \phi \leq \phimax$,} \\
 0 & \text{otherwise,}
 \end{cases}
        \quad i = 1, \dots, N , 
\end{align}
where the exponent $n_i > 1$ accounts for the slow down of the process at increasing concentrations.
Assumptions \eqref{relvelcon} and \eqref{videf}
can be combined as 
\begin{align} \label{uidef}
	u_i(\Phi) = v_{\infty i}(1-\phi)^{n_i-1}
\end{align}
for $\Phi \in \mathcal{D}_{\Phimax}$.

Strictly hyperbolic systems of conservation laws, where the eigenvalues of the Jacobian matrix of the flux function are real and distinct,
provide a relatively well understood framework for the solution of Riemann problems~\cite{dafermos}.
In \cite{bktw}, strict hyperbolicity of the system \eqref{systeq} with flux function \eqref{fluxfunction}
has been first shown for $N=2$ and later on in~\cite{bbkt} for general $N$,
but up to then only for coinciding hindered-settling factors $V_1(\phi)=V_2(\phi)=\dots=V_N(\phi)$,
which depend on the total concentration $\phi$. 
For a model with the more general hindrance factor \eqref{videf}, this implies to have constant exponents $n_1=n_2=\dots=n_N$,
a restriction that turned out to be unnecessary:
In \cite{bbb}, it is shown that strict hyperbolicity also holds for general $N\ge2$
and relative velocities of form $V_i(\Phi) = V_i(\phi)$ 
as long as the inequality 
\begin{align} \label{uineq}
	u_i' (1 - \phi) - u_i < 0
\end{align}
holds for all $i=1,\dots,N$, where $u_i'$ denotes the derivative with respect to $\phi$; 
in this situation the only restriction on the hindered-settling function $V_i(\Phi)=V_i(\phi)$
is to depend on the total concentration $\phi$.
The inequality~\eqref{uineq} 
is satisfied in particular when the relative velocities are ordered as $u_1 > u_2 > \dots > u_n$ for any $\phi$.
This holds in the case of the hindered-settling function \eqref{videf} for the situation
if the parameters are ordered as
\begin{align}  \label{vorder}
	v_{\infty 1} >  v_{\infty 2} > \dots > v_{\infty N} \quad \text{with} \quad n_1 < n_2 < \dots < n_N.
\end{align}
In \cite{rosa}, a secular equation framework was established that allows to verify strict hyperbolicity by checking a simple algebraic criterion;
this framework has been applied to the considered model with a Richardson-Zaki hindered settling function having constant exponent.
In \cite{bv09}, this framework was adapted to the same general model setting as in \cite{bbb}, {\it i.e.} with size-dependent hindered settling factors 
that not necessarily take the form \eqref{videf}.
Subsequently, in \cite{bdmv} the secular equation framework was applied to a series of choices of hindered-settling functions. 

Preliminary numerical simulations of Riemann problems for $N=3$ with arbitrary parameter choices (not exposed here)
showed that strict hyperbolicity might fail as coincidence of eigenvalues occurs,
providing an abrupt change of the solution structure.
The fact that this phenomenon of abrupt change already appears for $N=2$
made us to look for analytical insights in this situation, which lead to the present contribution.

This contribution deals with the wave classification 
for $2\times 2$ systems of conservation laws
that arise as one-dimensional kinematic models for the sedimentation of bidisperse suspensions.
The analytical examination of bidisperse suspensions 
gives insights to flow properties of polydisperse suspensions, which 
are mixtures of small solid particles dispersed in a viscous fluid.
In this contribution the focus is on models for particle suspension where all particles are assumed to have the same density.
%
%
Specifically, in this contribution, 
the properties of the $2 \times 2$ system \eqref{systeq}
(with $N=2$), flux function \eqref{fluxfunction} and closures \eqref{relvelcon}, \eqref{videf}
are studied.
The model of our interest contemplates the following specifications,
which is done in opposition to \eqref{vorder}, which would guarantee strict hyperbolicity: 

(S1) $v_{\infty 1} \,\,\,>\,\,\, v_{\infty 2} \,\,\, > \,\,\, 0$,

(S2) $n_1 \,\,\,>\,\,\, n_2 \,\,\,>\,\,\, 1$, 

(S3) $\phimax \,\,\equiv\,\, 1$.



For $N=2$ and $V_i(\phi)$ given by \eqref{videf} the flux function $\boldsymbol{f}(\Phi)=(f_1(\Phi), f_2(\Phi))^{\mathrm{T}}$
takes the form
\begin{align*} 
	f_1(\Phi)=\phi_1 \Bigl( v_{\infty 1}(1-\phi_1) (1-\phi)^{n_1 - 1}  - v_{\infty 2} \phi_2 (1-\phi)^{n_2-1} \Bigr),�\\
	f_2(\Phi)=\phi_2 \Bigl( v_{\infty 2}(1-\phi_2) (1-\phi)^{n_2 - 1}  - v_{\infty 1} \phi_1 (1-\phi)^{n_1-1} \Bigr),
\end{align*}
for values $\Phi \in \mathcal{D}_{\Phimax}$ and $f_1(\Phi)=f_2(\Phi)=0$ otherwise.
%
%
%
A special interest consists 
in the classification of the solution
structure of the Riemann problem
\begin{align} \label{rpdef}
    \Phi(t=0,x) = \begin{cases}
        \Phi^- \quad \text{if} \quad x < 0, \\
        \Phi^+ \quad \text{if} \quad x > 0 .
    \end{cases}
\end{align}
For convenience,
the Riemann problem consisting of the system of PDEs \eqref{systeq} with initial condition \eqref{rpdef} is referred to as $\mathrm{RP}(\Phi^-,
\Phi^+)$, with left and right values $\Phi^-$ and $\Phi^+$, respectively, to be specified.
%


The application of this model is the batch settling process of an initially homogeneous suspension in a closed container
described by
the initial-boundary value problem 
 \begin{gather} \nonumber 
  \partial_t \phi_i + \partial_x f_i (\Phi)
  = 0, \quad i=1, 2, \\
    \Phi(0,x)= \Phi_0(x), \quad  0 \leq x \leq
    L,
\label{pc-initcond} \\
    f_i (\Phi)
    =0, \quad
    x \in \{0,L\}, \quad i=1, 2 ,
    \label{boundcond}
\end{gather}
where $L$ is the domain height and the components of the flux-density vector $\boldsymbol{f}(\Phi) = ( \smash{f_1 } (\Phi), \smash{f_2 } (\Phi))^{\mathrm{T}}$
are given by \eqref{fluxfunction}.
Because of the zero-flux boundary
condition \eqref{boundcond}, the initial-boundary data \eqref{pc-initcond} and
\eqref{boundcond} can be replaced by the Cauchy data
\begin{align}
  \Phi (0,x) = \Phi_0 (x) = \begin{cases}
 O & \text{for $x <0$,} \\
\Phi_0 & \text{for $0 \leq x \leq L$,} \\
\Phi^{\infty}    & \text{for $x > L$,}
\end{cases}
 \label{inithom}
\end{align}
where $O := (0,\,0)^{\mathrm{T}}$ is the origin and $\Phi^{\infty}$ is a state on the maximum concentration manifold \eqref{maxphi}.
%
Therefore, the Riemann problems $\mathrm{RP}(O,\Phi)$ and $\mathrm{RP}(\Phi,\Phimax)$
 are of particular interest. 
This contribution reveals analytical insights into the solution structure of the Riemann problem $\mathrm{RP}(O,\Phi)$.
From an application point of view, the Riemann problem $\mathrm{RP}(O,\Phi)$ describes 
the interactions on the upper interface between clear liquid and initially homogeneous suspension 
during a batch settling process.

%



%
%
%
%


%

With respect to related work,
several studies on weakly hyperbolic systems, {\it i.e.} systems that are hyperbolic but not strictly hyperbolic,
are developed for models of multi-phase flow in porous media. 
%
%
For three-phase flow in porous media, the Corey model with convex permeability leads to a single isolated point, 
the so-called umbilic point, in which strict hyperbolicity fails \cite{HYPcfm, impt88, impt92, Matos, ss87}; 
another loss of hyperbolicity occurs when a the phase space contains an elliptic region as it occurs for the Stone model~\cite{fayers}. To solve Riemann problems, the wave-curve method has been applied, in which a sequence of elementary waves are connected. When strict hyperbolicity fails, it is not sufficient to consider the method of Liu to deal with non-convex fluxes; rather one has also to considers non-local branches of the Hugoniot locus \cite{cfm}. The wave-curve method has been applied to the injection problem, where a gas-water mixture is injected in a porous medium containing oil \cite{asfmp10}.
For a system of two conservation laws with a quadratic flux function the solution in the neighborhood of the umbilic point has been classified in \cite{Cido, Matos, ss87}.
Following the idea of studying the solution behavior in the neighborhood of an umbilic point, in the case of a quadratic flux function four different types of umbilic points corresponding to different shapes of the close by integral curves could be identified \cite{ss87}.
In the Corey model with convex permeability only two types of umbilic points occur~\cite{Matos}.
According to the proposed classification, certain types of Riemann problems have been considered in \cite{impt88}.
A systematic classification of solutions of the Riemann problem for non-strictly hyperbolic systems of two conservations laws, which count with an umbilic point and with the identity viscosity matrix has been carried out in \cite{smp96,spm01}.
A non-local Hugoniot locus leads to non-classical waves and in some cases to transitional shocks \cite{ampz02capillary,majda}. These transitional shocks are sensitive to the regularization by a non-identical viscosity matrix.


\section{Basic definitions} 
\label{sec:theory}

In this paragraph several definitions~\cite{Cido, furtado, panters} 
are collocated in order to facilitate the appropriate classification for the system under study.

%


\begin{definition} 
The Hugoniot locus of a state $\Phi^-$, denoted as $\mathcal{H}(\Phi^-)$, is the set of all states $\Phi^+$ 
that satisfy the Rankine-Hugoniot condition
\begin{align} \label{rhcond}
	f(\Phi^+) - f(\Phi^-) = \sigma ( \Phi^+ - \Phi^-),
\end{align}
where $\sigma = \sigma(\Phi^-,\,\Phi^+)$ is the propagation velocity of the discontinuity.
\end{definition}

The shock classification according to Lax~\cite{Lax57} is used
in order to refer to a subset of the Hugoniot locus that corresponds to a certain wave family.
%
%
%
%
The corresponding admissible shocks are classified depending on inequalities between the first and the second eigenvalues at both sides of the discontinuity and the discontinuity speed itself, see {\it e.g.} \cite{Lax57, smp96}.

\begin{definition}\label{def:shocks}
Three kinds of classical admissible shocks can be distinguished.
The classification applies between a left state $\Phi^-$ and a right state $\Phi^+$ 
which are connected by the Rankine-Hugoniot condition \eqref{rhcond}
with jump velocity $\sigma = \sigma(\Phi^-,\,\Phi^+)$:

{\it 1-Lax shock}: 
$\lambda_1(\Phi^+) \leq \sigma \leq \lambda_1(\Phi^-)$ and $\sigma \leq \lambda_2(\Phi^+)$ 

{\it 2-Lax shock}: 
$\lambda_2(\Phi^+) \leq \sigma \leq \lambda_2(\Phi^-)$ and $\lambda_1(\Phi^-) \leq \sigma$ 

{\it Over-compressive shock (OC)}: 
$\lambda_2(\Phi^+) < \sigma < \lambda_1(\Phi^-)$ 

\end{definition}

\noindent
Left- and right-characteristic shocks are included in this shock type definition.
They occur when a shock speed coincides with the characteristic speed.
Whereas by definition an over-compressible shock cannot be characteristic, the limit of the inequalities above are included in the {\it 1-Lax} and {\it 2-Lax} shocks, which are also called first and second shock waves.

An inflection manifold $\mathcal{I}_i$ is determined for all states $\Phi$ where the $i$-th eigenvalue attains a maximum or minimum value along the integral curve of the same family.
\begin{definition}[Inflection curve]\label{def:inflection}
The $i$-th inflection manifold is defined as
\begin{equation*}
\mathcal{I}_i := \big\{ \Phi\in\mathcal{D}_{\Phimax} \,:\, \nabla\lambda_i(\Phi)\cdot r_i(\Phi) = 0 \big\} ,
\end{equation*}
where $\lambda_i$ is the $i$-th eigenvalue 
of the Jacobian matrix of the flux function
and $r_i$ is the corresponding eigenvector.
\end{definition}


In the sense of the invariant manifolds defined in \cite{Tem83}, we introduce the following concept

\begin{definition}\label{def:ContactMan}
An $i$-th contact manifold occurs when the $i$-th integral curve passing through a state $\Phi^o$ 
coincides with a part of the Hugoniot locus $\mathcal{H}(\Phi^o)$,
such that any state $\Phi$ on this intersection satisfies
$$\lambda_i(\Phi^o) = \sigma(\Phi^o,\,\Phi) = \lambda_i(\Phi),$$
for the shock speed $\sigma(\Phi^o,\,\Phi)$.
\end{definition}

A necessary condition for establishing an $i$-th contact manifold
is that the integral curve is not a rarefaction in the usual sense, but that the characteristic speed is fixed along the curve.
For states on a contact manifold the following transitivity rule holds.
If $\Phi_1$ and $\Phi_2$ mutually belong to the Hugoniot locus of the other,
connected by a shock of speed $\sigma = \sigma(\Phi_1,\,\Phi_2)$ and
if $\Phi_2$ is on the Hugoniot locus of a state $\Phi_3$ by a shock of the same speed $\sigma$,
then $\Phi_1$ belongs to $\mathcal{H}(\Phi_3)$ and $\sigma(\Phi_1,\,\Phi_3)$ also coincides with $\sigma$.
This is the essence of the Triple Shock Rule \cite{HYPcfm,cfm,furtado}. Another useful version establish the following.

\begin{lemma}\label{TripleShock}
Let $\Phi_1,\,\Phi_2,\,\Phi_3$ be non-collinear states such that $\Phi_1,\,\Phi_2$ belong to $\mathcal{H}(\Phi_3)$ and $\Phi_1$ belongs to $\mathcal{H}(\Phi_2)$, then $\sigma(\Phi_1,\,\Phi_2) = \sigma(\Phi_2,\,\Phi_3) =\sigma(\Phi_1,\,\Phi_3).$
\end{lemma}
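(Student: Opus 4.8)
The plan is to reduce the statement to pure linear algebra by exploiting the linearity of the Rankine--Hugoniot relation \eqref{rhcond} in the flux differences, together with the affine independence encoded in the non-collinearity hypothesis. First I would record the three instances of \eqref{rhcond} supplied by the hypotheses. Writing $\sigma_{13}=\sigma(\Phi_1,\Phi_3)$, $\sigma_{23}=\sigma(\Phi_2,\Phi_3)$ and $\sigma_{12}=\sigma(\Phi_1,\Phi_2)$, and using that the shock speed is unchanged when the two endpoints are swapped (multiply \eqref{rhcond} by $-1$), the memberships $\Phi_1,\Phi_2\in\mathcal{H}(\Phi_3)$ and $\Phi_1\in\mathcal{H}(\Phi_2)$ read
\begin{align*}
f(\Phi_1)-f(\Phi_3) &= \sigma_{13}(\Phi_1-\Phi_3),\\
f(\Phi_2)-f(\Phi_3) &= \sigma_{23}(\Phi_2-\Phi_3),\\
f(\Phi_1)-f(\Phi_2) &= \sigma_{12}(\Phi_1-\Phi_2).
\end{align*}

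The second step is to eliminate the flux values entirely. Subtracting the second line from the first cancels $f(\Phi_3)$ and produces $f(\Phi_1)-f(\Phi_2)$ on the left, which by the third line equals $\sigma_{12}(\Phi_1-\Phi_2)$. Introducing the difference vectors $a:=\Phi_1-\Phi_3$ and $b:=\Phi_2-\Phi_3$, so that $\Phi_1-\Phi_2=a-b$, this identity becomes
\begin{equation*}
\sigma_{12}(a-b)=\sigma_{13}\,a-\sigma_{23}\,b ,
\end{equation*}
and collecting the terms in $a$ and $b$ gives the single vector relation $(\sigma_{12}-\sigma_{13})\,a = (\sigma_{12}-\sigma_{23})\,b$.

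The final step invokes the geometric hypothesis. Non-collinearity of $\Phi_1,\Phi_2,\Phi_3$ is precisely the assertion that $a=\Phi_1-\Phi_3$ and $b=\Phi_2-\Phi_3$ are linearly independent. The displayed proportionality can therefore hold only if both scalar coefficients vanish, that is $\sigma_{12}=\sigma_{13}$ and $\sigma_{12}=\sigma_{23}$, which yields the claimed equality $\sigma(\Phi_1,\Phi_2)=\sigma(\Phi_2,\Phi_3)=\sigma(\Phi_1,\Phi_3)$. The computation is elementary; the only place the argument genuinely leans on a hypothesis---and the sole point requiring care---is this last step, since if the three states were \emph{collinear} the vectors $a$ and $b$ would be parallel and the relation could be satisfied with distinct speeds, so the conclusion would fail. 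Accordingly I would state the non-collinearity assumption explicitly and note that it is exactly what separates the present two-dimensional version of the rule from the scalar, same-speed transitivity statement recalled before the lemma.
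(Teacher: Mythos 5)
Your proof is correct and complete: the three Rankine--Hugoniot relations, the elimination of the flux values, and the observation that non-collinearity makes $\Phi_1-\Phi_3$ and $\Phi_2-\Phi_3$ linearly independent (forcing both scalar coefficients in $(\sigma_{12}-\sigma_{13})(\Phi_1-\Phi_3)=(\sigma_{12}-\sigma_{23})(\Phi_2-\Phi_3)$ to vanish) constitute exactly the standard argument. The paper itself gives no proof of Lemma~\ref{TripleShock}, deferring instead to the Triple Shock Rule literature \cite{HYPcfm,cfm,furtado}, and your derivation is precisely the argument underlying those citations; you also correctly handle the endpoint-swap symmetry of $\sigma$ and rightly flag non-collinearity as the one hypothesis without which the conclusion fails.
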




On a contact manifold, rarefactions are indistinguishable from shocks; all speeds match a characteristic speed. Remarkably, a Hugoniot locus with constant characteristic speed is planar and coincides with the integral curves \cite{Tem83}.

\section{Contact manifold} \label{sec:contact}

If the specifications (S1) and (S2)
of the considered model hold,
then a contact manifold inside the phase space can be identified.
%
This manifold turns out to be decisive for the characterization of solutions of Riemann problems,
in particular because the origin is connected to this manifold by a
right characteristic shock.

\begin{definition}
A set
$\mathcal{C}(\Phic)$ is defined as 
a subset of the phase space $D_{\Phimax}$ 
which contains a state $\Phic=(\phic_1,\phic_2)$ 
that is connected to other states by the property
\begin{align} \label{csdef}
	\mathcal{C}(\Phic) := \{ \Phi \in D_{\Phimax}: v_1(\Phi) = v_2(\Phi) = v_1(\Phic) \} .
\end{align}
\end{definition}
The state $\Phic$ is a representative of the set $\mathcal{C}(\Phic)$.
The definition of a set $\mathcal{C}(\Phic)$ unifies two complementary properties:
\begin{enumerate}
\item $v_1(\Phi)=v_2(\Phi)$ for all $\Phi \in \mathcal{C}(\Phic)$,
\item $v_i(\Phi^-)=v_i(\Phi^+)$ for $\Phi^-, \Phi^+ \in \mathcal{C}(\Phic)$ and $i =1,\,2$.
\end{enumerate}
Property (1) describes the local coincidence of velocities of different phases in one particular state,
whereas property (2) describes the constancy of the velocities of a single family along the manifold.

The following Lemma is a generic result, stating that,
a set $\mathcal{C}(\Phic)$ is a contact manifold
whenever the flux function has a certain structure.

\begin{lemma} \label{lemcont}
If the flux function of the system \eqref{systeq} has the structure 
\begin{align} \label{fpv}
	f_i(\Phi)=\phi_i v_i(\Phi)
\end{align}
then, for any $\Phic \in D_{\Phimax}$, the set $\mathcal{C}(\Phic)$
is a contact manifold with constant shock speed 
\begin{align} \label{sv1p}
	\sigma(\Phi^-, \Phi^+) = v_1(\Phic) .
\end{align}
Moreover, if the structure of the flux function \eqref{fpv} is such that the
absolute velocity $v$ depends on the total concentration $\phi$, namely
\begin{align} \label{vphi}
	v_i(\Phi) = v_i(\phi),
\end{align}
then the contact manifold $\mathcal{C}(\Phic)$ 
consists of the line 
\begin{align} \label{cline}
	\mathcal{C}(\Phic) = \{ \Phi \in \mathcal{D}_{\Phimax}: \phi_1+\phi_2 = \phic \} ,
\end{align}
where $\phic=\phic_1 + \phic_2$ is the total concentration of $\Phic$.
\end{lemma}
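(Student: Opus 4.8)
The plan is to treat the two assertions separately, beginning with the generic contact property under the product structure \eqref{fpv} by arguing directly from the Rankine--Hugoniot condition. Take any two states $\Phi^-,\Phi^+ \in \mathcal{C}(\Phic)$; by the definition \eqref{csdef} both satisfy $v_1(\Phi^\pm) = v_2(\Phi^\pm) = v_1(\Phic)$. Substituting $f_i = \phi_i v_i$ gives, componentwise,
$$ f_i(\Phi^+) - f_i(\Phi^-) = \phi_i^+ v_i(\Phi^+) - \phi_i^- v_i(\Phi^-) = v_1(\Phic)\,(\phi_i^+ - \phi_i^-), $$
because every velocity attained on $\mathcal{C}(\Phic)$ equals the single constant $v_1(\Phic)$. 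This is precisely the Rankine--Hugoniot condition \eqref{rhcond} with constant speed $\sigma = v_1(\Phic)$, which proves \eqref{sv1p} and shows that $\mathcal{C}(\Phic)$ lies inside the Hugoniot locus of each of its points.

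To upgrade this from a constant-speed Hugoniot locus to a contact manifold in the sense of Definition \ref{def:ContactMan}, I must verify that this common speed is also the characteristic speed. Here I would differentiate the identity $f(\Phi) = f(\Phi^-) + \sigma(\Phi - \Phi^-)$ along $\mathcal{C}(\Phic)$: since $\Phi^-$ and $\sigma$ are held fixed, the arc-length derivative yields $J(\Phi)\,\Phi' = \sigma\,\Phi'$, so the tangent to the manifold is an eigenvector of the Jacobian $J(\Phi)$ with eigenvalue $\sigma = v_1(\Phic)$. Consequently the characteristic speed of that family is constant along $\mathcal{C}(\Phic)$ and equal to the shock speed, and the manifold coincides with an integral curve, which is exactly the defining property of a contact manifold and matches the characterization of \cite{Tem83} already cited.

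For the second part I specialize to $v_i(\Phi) = v_i(\phi)$. Along the line $\phi_1 + \phi_2 = \phic$ every velocity depends only on $\phi = \phic$, hence $v_i(\Phi) = v_i(\phic) = v_i(\Phic)$; since $\Phic$ itself satisfies $v_1 = v_2$, this gives $v_1(\Phi) = v_2(\Phi) = v_1(\Phic)$, so the whole line is contained in $\mathcal{C}(\Phic)$. Conversely, $\Phi \in \mathcal{C}(\Phic)$ forces $v_1(\phi) = v_1(\phic)$, and the strict monotonicity of $\phi \mapsto v_1(\phi)$ then yields $\phi = \phic$, which gives the reverse inclusion and hence \eqref{cline}. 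As an explicit cross-check I would compute the Jacobian: under $v_i = v_i(\phi)$ one finds $J = v_1(\phic)\,I + M$ on the line, where $M$ has the two identical columns $(\phi_1 v_1',\,\phi_2 v_2')^{\mathrm{T}}$ and is therefore of rank one with kernel spanned by $(1,-1)^{\mathrm{T}}$; since $(1,-1)^{\mathrm{T}}$ is tangent to the line, it is an eigenvector of $J$ with eigenvalue $v_1(\Phic)$, recovering the contact property concretely.

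The step I expect to be the main obstacle is not the Rankine--Hugoniot computation, which is immediate, but the verification that the constant shock speed actually coincides with a characteristic speed, that is, the passage from a constant-speed Hugoniot locus to a genuine contact manifold. The differentiation argument settles this provided $\mathcal{C}(\Phic)$ is a smooth curve; the only remaining point that needs care is the converse inclusion in the second part, where the monotonicity of $v_1(\phi)$ is required to exclude spurious states with $\phi \neq \phic$.
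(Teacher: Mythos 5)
Your proof is correct, and its first half coincides with the paper's argument: the factorization $\phi_i^+ v_i(\Phi^+) - \phi_i^- v_i(\Phi^-) = v_1(\Phic)(\phi_i^+ - \phi_i^-)$ is exactly how the paper verifies the Rankine--Hugoniot condition \eqref{rhcond} with the constant speed \eqref{sv1p}. Where you genuinely diverge is in upgrading the constant-speed Hugoniot locus to a contact manifold: the paper appeals to the limiting fact that shock speeds converge to an eigenvalue as $\Phi^+ \to \Phi^-$ along the Hugoniot locus, concluding $\lambda_i(\Phi^-)=\lambda_i(\Phi^+)=v_1(\Phic)$, whereas you differentiate the identity $f(\Phi)=f(\Phi^-)+\sigma(\Phi-\Phi^-)$ along the manifold to get $\JJ(\Phi)\,\Phi' = \sigma\,\Phi'$. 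Your route buys a little more: it exhibits the tangent as the eigenvector, so the coincidence of $\mathcal{C}(\Phic)$ with an integral curve of the family (as required by Definition~\ref{def:ContactMan}, and consistent with Temple's invariant-manifold result) falls out directly instead of being cited; your rank-one computation $\JJ = v_1(\Phic) I + M$ with kernel spanned by $(1,-1)^{\mathrm{T}}$ likewise recovers the eigenvector \eqref{ev2} concretely. On the second assertion you are more careful than the paper, which only checks that the velocities are constant on the line --- i.e.\ the inclusion of the line \eqref{cline} into $\mathcal{C}(\Phic)$ --- and asserts equality; you prove the reverse inclusion as well, but at the cost of assuming strict monotonicity of $\phi \mapsto v_1(\phi)$, a hypothesis not present in the lemma. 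Under \eqref{vphi} alone, $\mathcal{C}(\Phic)$ could a priori be a union of parallel lines, one for each total concentration $\phi$ with $v_1(\phi)=v_2(\phi)=v_1(\phic)$; for the model at hand this is excluded because $v_1=v_2$ holds only at $\phi=\phis$ and $\phi=\phimax$ (cf.\ \eqref{u12ineq}) and these carry distinct speeds. So either state the monotonicity explicitly as an assumption or argue the reverse inclusion from the model structure; with that caveat made precise, your proof is a sharpening of the paper's rather than a departure from it.
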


\begin{proof}
%
By definition \eqref{csdef}
any states $\Phi^-, \Phi^+ \in \mathcal{C}(\Phic)$  satisfy that
$$v_1(\Phi^-)=v_2(\Phi^-)=v_1(\Phi^+)=v_2(\Phi^+)=v_1(\Phic) . $$ 
%
such that one can factorize 
\begin{align} \label{pvs}
	\phi_i^+ v_i(\Phi^+) - \phi_i^- v_i(\Phi^-)
	=  v_1(\Phic) (\phi_i^+ - \phi_i^-) , \quad i=1,2 .
\end{align}
From the specific structure of the flux function \eqref{fpv} 
one recognizes that equation \eqref{pvs} states the Rankine-Hugoniot condition \eqref{rhcond} 
with shock speed \eqref{sv1p}. 
This establishes that 
all states on a set $\mathcal{C}(\Phic)$ belong mutually to the Hugoniot locus of each other
and
any two states on a set $\mathcal{C}(\Phic)$ can be connected by a shock of the same speed. 

%
%

Since shock speeds locally converge to an eigenvalue, 
on a manifold with constant shock speed 
any two states $\Phi^-$ and $\Phi^+$ have an eigenvalue that coincides with the shock speed,
\begin{align*}
	\lambda_i(\Phi^-) = \lambda_i(\Phi^+) = v_1(\Phic) ,
\end{align*}
such that Definition~\ref{def:ContactMan} of a contact manifold
follows. 
%
%

The shape of a manifold $\mathcal{C}(\Phic)$ to be a line can be deduced from
property \eqref{vphi} which assures that the absolute velocities are
constant on the line \eqref{cline}, {\it i.e.}
$v_1(\Phi)=v_2(\Phi)$ for all $\Phi \in \mathcal{C}(\Phic)$.
\end{proof}

%

With this Lemma, nothing yet is said about the existence of such a contact manifold for the considered model equation.
Neither it is decided to which characteristic family the contact manifold belongs, whether to the first or the second one.

A further characterization of a set $\mathcal{C}(\Phic)$
is developed in the sequel, starting from properties 
that can be derived from the generic structure of the model, leading to properties that depend on particular model specifications.
A property that can be used in several instances is
\begin{align} \label{uvequiv}
 	v_i(\Phi)=v_j(\Phi) \quad \Leftrightarrow	\quad u_i(\Phi)=u_j(\Phi),
	\qquad
	i,j \in \{ 1,\,2\} ,
\end{align}
which follows directly from \eqref{relvel}. 
Property \eqref{uvequiv}
assures that condition \eqref{vphi} is satisfied for the model under consideration, 
where the flux function has the structure \eqref{fluxfunction} complemented by constitutive assumptions 
\eqref{relvelcon} and \eqref{videf}.



%
%

Up to now it has been shown that any set $\mathcal{C}(\Phic)$ is a contact manifold, which has in addition the shape of a line.
In the next Lemma it is shown that for the considered model specifications such manifolds effectively exist.

\begin{lemma}
If conditions (S1) and (S2) 
are satisfied,
then two distinct contact manifolds exist in the domain $\mathcal{D}_{\Phimax}$, namely
\begin{eqnarray}
\label{def:contactMax}
\mathcal{C}(\Phimax) &:=& \partial^{\infty}= \{ \Phi = (\phi_1,\,\phi_2)^{\mathrm{T}}: \; \phi_1+\phi_2 = \phimax \}, \\
\label{def:contact*}
\mathcal{C}(\Phis)     &:=& \{ \Phi = (\phi_1,\,\phi_2)^{\mathrm{T}}: \; \phi_1+\phi_2 = \phis \} . 
\end{eqnarray}
The manifold 
$\mathcal{C}(\Phimax)$
is represented by any state $\Phimax \in \partial^\infty$.
A representative state $\Phis$ 
that defines the contact manifold $\mathcal{C}(\Phis)$
that is distinct to the manifold $\mathcal{C}(\Phimax)$ can be
identified as
\begin{align}\label{def:phi*}
	\Phis =(\phis,\, 0)^{\mathrm{T}} ,
	\quad
	\phis = 1 - (v_{2 \infty} / v_{1 \infty})^{1/(n_1 - n_2)} .
\end{align}

\end{lemma}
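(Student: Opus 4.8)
The plan is to reduce the vector condition defining a set $\mathcal{C}(\Phic)$ to a single scalar equation in the total concentration $\phi$, to solve that equation explicitly, and then to invoke Lemma~\ref{lemcont} to promote each solution into a contact manifold in the form of a line. The existence claim and the formula \eqref{def:phi*} will then follow from elementary algebra, with the admissibility of the interior level the only genuinely substantive point.

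First I would use the equivalence \eqref{uvequiv} to replace the defining requirement $v_1(\Phi)=v_2(\Phi)$ by the equivalent algebraic condition $u_1(\Phi)=u_2(\Phi)$. Substituting the explicit closure \eqref{uidef} turns this into
\begin{align*}
	v_{\infty 1}(1-\phi)^{n_1-1} = v_{\infty 2}(1-\phi)^{n_2-1},
\end{align*}
an equation depending on $\Phi$ only through the total concentration $\phi=\phi_1+\phi_2$. Since condition \eqref{vphi} thereby holds, Lemma~\ref{lemcont} guarantees that each level set $\{\phi=\text{const}\}$ solving this equation is a contact manifold consisting of a line, so it remains only to determine the admissible levels.

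Next I would factor the common power $(1-\phi)^{n_2-1}$ out of the equation, rewriting it as
\begin{align*}
	(1-\phi)^{n_2-1}\Bigl[ v_{\infty 1}(1-\phi)^{n_1-n_2} - v_{\infty 2} \Bigr] = 0.
\end{align*}
Condition (S2) gives $n_2>1$, so the first factor vanishes exactly at $\phi=1=\phimax$, recovering the maximum-packing manifold $\partial^{\infty}$ and hence $\mathcal{C}(\Phimax)$. The bracketed factor vanishes when $(1-\phi)^{n_1-n_2}=v_{\infty 2}/v_{\infty 1}$; since $n_1>n_2$ by (S2), I may take the $(n_1-n_2)$-th root to obtain the interior level $\phi=\phis=1-(v_{\infty 2}/v_{\infty 1})^{1/(n_1-n_2)}$, which is precisely \eqref{def:phi*}. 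As $(1-\phi)^{n_1-n_2}$ is strictly monotone in $\phi$, this bracketed factor has a unique root, so the factorization exhausts all solutions and yields exactly the two stated manifolds.

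Finally I would verify admissibility and distinctness, which is where the main work lies. Conditions (S1) and (S2) force $0<v_{\infty 2}/v_{\infty 1}<1$ together with $1/(n_1-n_2)>0$, so the root satisfies $(v_{\infty 2}/v_{\infty 1})^{1/(n_1-n_2)}\in(0,1)$ and hence $0<\phis<1=\phimax$; thus the interior line lies inside $\mathcal{D}_{\Phimax}$ and is distinct from $\partial^{\infty}$. Because Lemma~\ref{lemcont} shows each manifold is a line of constant total concentration, any state on it serves as a representative, which justifies the convenient choices of an arbitrary $\Phimax\in\partial^{\infty}$ and $\Phis=(\phis,0)^{\mathrm{T}}$. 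I expect no real obstacle beyond this sign bookkeeping: confirming that the interior root genuinely lands strictly inside $(0,\phimax)$ is the one step that essentially uses the orderings in (S1) and (S2).
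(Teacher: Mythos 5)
Your proposal is correct and follows essentially the same route as the paper's proof: reduce $v_1(\Phi)=v_2(\Phi)$ to $u_1(\Phi)=u_2(\Phi)$ via \eqref{uvequiv}, solve the resulting scalar equation in the total concentration $\phi$ to obtain the two levels $\phi=\phimax$ and $\phi=\phis$ as in \eqref{def:phi*}, check $\phis\in(0,1)$ from (S1)--(S2), and invoke Lemma~\ref{lemcont} for the contact-manifold and line structure. Your single factorization handling both roots at once (with the monotonicity remark showing these exhaust all solutions) is a slightly tidier packaging than the paper's separate treatment of the boundary case $u_1=u_2=0$, but it is the same argument in substance.
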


\begin{proof}
First, it is shown that the boundary $\partial^{\infty}$ is a contact manifold.
For any state $\Phimax \in \partial^{\infty}$
one has $u_1(\Phimax)=u_2(\Phimax)=0$. By property \eqref{uvequiv} 
one gets $v_1(\Phimax)=v_2(\Phimax)=0$ for any state $\Phimax \in \partial^{\infty}$,
satisfying the Definition \eqref{csdef} of the set $\mathcal{C}(\Phic)$,
which by Lemma \ref{lemcont} is a contact manifold.

To show that there is an additional contact manifold $\mathcal{C}(\Phis)$,
one has to find a set of states $\Phis \not\in \mathcal{C}(\Phimax)$
such that $v_1(\Phis)=v_2(\Phis)$.
Because of property \eqref{uvequiv} 
it is equivalent to find 
a $\Phis$ such that
$u_{1}(\Phis)=u_{2}(\Phis)$.
Resolving 
\begin{align*}
	v_{1 \infty} (1-\phis)^{n_1} = v_{2 \infty} (1-\phis)^{n_2}
\end{align*}
with respect to $\phis$, which is the total concentration of any representant $\Phis$,
gives \eqref{def:phi*}.
The conditions on the parameters (S1) and (S2) 
guarantees that $\phis \in (0,1)$ exists.
\end{proof}

Throughout this work,
the notation $\mathcal{C}(\Phic)$ is used to refer to a generic contact manifold,
whereas $\mathcal{C}(\Phis)$ and $\mathcal{C}(\Phimax)$ refer to specific contact manifolds
with assigned representative values $\Phis$ and $\Phimax$, respectively.
%
%
%
%
%
It turns out that the contact manifolds $\mathcal{C}(\Phis)$ and $\mathcal{C}(\Phimax)$  form transversal branches of the Hugoniot locus of the origin.

\begin{lemma}[Hugoniot locus of origin] \label{hlorigin2}
The Hugoniot locus $\mathcal{H}(O)$ of the origin $O = (0,\,0)^T$
consists of four branches: the two coordinate axes as local branches,
\begin{eqnarray}\label{def:boundaries}
\partial^1 := \{ \Phi = (\phi,\,0)^{\mathrm{T}}, \; \phi \in [0,1] \}, \quad 
\partial^2 := \{ \Phi = (0,\,\phi)^{\mathrm{T}}, \; \phi \in [0,1] \}, 	
\end{eqnarray}
with variable shock speed
\begin{align} \label{speedL}
	\sigma(O, \Phi) = v_i(\Phi) \quad \text{for} \quad \Phi \in \partial^i, \quad i=1,2 ,
\end{align}
and the two contact manifolds $\mathcal{C}(\Phis)$ and $\mathcal{C}(\Phimax)$, 
identified as \eqref{def:contactMax} and \eqref{def:contact*},
as transversal branches
with constant shock speed
\begin{align} \label{speedT}
	\sigma(O, \Phi) = v_1(\Phi) = v_2(\Phi) \quad \text{for} \quad \Phi \in \mathcal{C}(\Phic) .
\end{align}
\end{lemma}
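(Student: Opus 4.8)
The plan is to substitute the left state $\Phi^- = O$ directly into the Rankine--Hugoniot condition \eqref{rhcond} and to exploit the multiplicative structure $f_i(\Phi) = \phi_i v_i(\Phi)$. Since $f_i(O) = 0 \cdot v_i(O) = 0$ for $i = 1,2$, the condition $f(\Phi) - f(O) = \sigma(\Phi - O)$ collapses componentwise to $\phi_i v_i(\Phi) = \sigma \phi_i$, that is
\begin{align*}
	\phi_i \bigl( v_i(\Phi) - \sigma \bigr) = 0, \quad i = 1, 2 .
\end{align*}
This factorized identity is the heart of the argument: every state on $\mathcal{H}(O)$ must, for each index $i$, either lie on the corresponding coordinate axis ($\phi_i = 0$) or propagate at the $i$-th absolute velocity ($v_i(\Phi) = \sigma$).

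First I would carry out the exhaustive case distinction over the two factors. Discarding the trivial solution $\Phi = O$, three non-trivial combinations remain. If $\phi_1 = 0$ while $v_2(\Phi) = \sigma$, the state lies on $\partial^2$ and travels at speed $\sigma = v_2(\Phi)$; symmetrically, $\phi_2 = 0$ with $v_1(\Phi) = \sigma$ yields the branch $\partial^1$ with $\sigma = v_1(\Phi)$. These reproduce the local branches \eqref{def:boundaries} and the variable speed \eqref{speedL}. The remaining possibility is $v_1(\Phi) = \sigma$ together with $v_2(\Phi) = \sigma$, which forces $v_1(\Phi) = v_2(\Phi)$ with the common value equal to $\sigma$.

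Next I would invoke the preceding Lemma to identify this last case. The states satisfying $v_1(\Phi) = v_2(\Phi)$ are exactly the two lines $\{\phi = \phis\}$ and $\{\phi = \phimax\}$, namely the contact manifolds $\mathcal{C}(\Phis)$ and $\mathcal{C}(\Phimax)$; along each, the common value $v_1(\Phi) = v_2(\Phi)$ serves as the shock speed, which establishes \eqref{speedT}. Because these lines $\phi_1 + \phi_2 = \mathrm{const} > 0$ do not pass through the origin, whereas the axes $\partial^1, \partial^2$ do, they enter $\mathcal{H}(O)$ as transversal rather than local branches, completing the enumeration of all four branches.

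The argument is largely routine once the factorization is in place; the one point that requires care is arguing that the case analysis is genuinely exhaustive and that the shock speeds match unambiguously at the intersection points, for instance at $\Phis = (\phis, 0)^{\mathrm{T}}$ where $\partial^1$ meets $\mathcal{C}(\Phis)$. There the two descriptions must agree, which they do because $v_1 = v_2$ holds precisely on the contact manifolds, so the value of $\sigma$ is well defined. No substantial obstacle is expected beyond this bookkeeping, since the structural content has already been supplied by the Lemma characterizing the contact manifolds as the level sets of $v_1 = v_2$.
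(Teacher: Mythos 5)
Your proposal is correct and follows essentially the same route as the paper: both substitute $\Phi^-=O$ into the Rankine--Hugoniot condition to obtain the factorized system $\phi_i\bigl(v_i(\Phi)-\sigma\bigr)=0$, $i=1,2$, and then split into the axis cases (one equation trivially satisfied, speed $\sigma=v_i(\Phi)$) and the transversal case $v_1(\Phi)=v_2(\Phi)=\sigma$, identified with $\mathcal{C}(\Phis)$ and $\mathcal{C}(\Phimax)$ by the preceding lemma. Your version is merely a little more explicit than the paper's about the exhaustiveness of the case distinction and the consistency of $\sigma$ at the intersection points, which is harmless added bookkeeping rather than a different argument.
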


\begin{proof}
With the structure of the flux function \eqref{fluxfunction}
the Rankine-Hugoniot condition \eqref{rhcond} connecting the origin $O$ with any state $\Phi$ 
takes the form
\begin{align*} 
	\sigma(O, \Phi) \phi_i = v_i(\Phi) \phi_i, 
	\quad i = 1,\,2.
\end{align*}
This system of two equations has two possible kinds of solution: 
On the local branches on the axes, $\partial^1$ and $\partial^2$ , 
one of the two equations becomes obsolete, since both sides vanish on the considered axis.
Therefore, the velocity of the remaining equation determines the shock speed as \eqref{speedL}.
On the transversal branches, $\mathcal{C}(\Phis)$ and $\mathcal{C}(\Phimax)$, no such cancellation occurs 
such that for a solution the velocities are required to be equal, giving speed  \eqref{speedT}.
\end{proof}

\noindent
%
%
%
The triple shock rule as stated in Lemma~\ref{TripleShock} applies to 
the connection of the origin to any state $\Phi \in \mathcal{C}(\Phic)$ having speed \eqref{speedT}
with any middle state $\Phi^M \in C(\Phic)$ having speed \eqref{sv1p}. 
Indeed, $O,\,\Phi,\,\Phi^M$ are not collinear, thus we have $\sigma(O,\,\Phi^M) = \sigma(\Phi^M,\,\Phi) = \sigma(O,\,\Phi)$.
This means that any (shock) solution $O \rightarrow \Phi$ can be constructed by the shock $O \rightarrow \Phi^M$ followed by a second shock $\Phi^M \rightarrow \Phi$ of same speed; both solutions determines the same wave pattern, so the same solution.

Another useful property is the convertibility  of 
the relative velocity $u_i(\Phi)$ with the absolute velocity $v_i(\Phi)$
on the edges:

\begin{lemma}
For states on the edges, $\Phi \in \partial^i, i=1,2$,
one has
\begin{align} \label{vedge}
	v_i(\Phi) = (1-\phi) u_i(\Phi) .
\end{align}
\end{lemma}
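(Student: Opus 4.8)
The plan is to compute $v_i(\Phi)$ directly from its defining relation and to exploit the fact that a point on the edge $\partial^i$ has a single nonzero coordinate. First I would recall from \eqref{fluxfunction} that the absolute velocity is expressed through the relative velocities as
\begin{align*}
	v_i(\Phi) = u_i(\Phi) - \Phi^{\mathrm{T}} \uu = u_i(\Phi) - \bigl( \phi_1 u_1(\Phi) + \phi_2 u_2(\Phi) \bigr),
\end{align*}
where the scalar product has been written out for $N = 2$.

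The decisive observation is that on the edge $\partial^i$, as described in \eqref{def:boundaries}, the state $\Phi$ has only its $i$-th component nonzero, and that this surviving component coincides with the total concentration $\phi = \phi_1 + \phi_2$. Concretely, on $\partial^1$ one has $\phi_2 = 0$ and $\phi_1 = \phi$, so the bracketed sum collapses to $\phi \, u_1(\Phi)$; symmetrically, on $\partial^2$ one has $\phi_1 = 0$ and $\phi_2 = \phi$, so the sum collapses to $\phi \, u_2(\Phi)$. In either case only the $i$-th term of $\Phi^{\mathrm{T}} \uu$ survives, and it equals $\phi \, u_i(\Phi)$.

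Substituting this back and factoring out $u_i(\Phi)$ then yields $v_i(\Phi) = u_i(\Phi) - \phi \, u_i(\Phi) = (1-\phi)\, u_i(\Phi)$, which is precisely \eqref{vedge}. There is no genuine obstacle here: the identity is an immediate consequence of the linear structure of the flux model \eqref{fluxfunction} together with the coordinate description of the edges. The only point meriting explicit attention is the vanishing of the off-diagonal term of the scalar product on each axis, which follows directly from the definition of $\partial^i$; everything else is a one-line substitution.
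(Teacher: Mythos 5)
Your proof is correct and follows essentially the same route as the paper: both start from the defining relation $v_i(\Phi) = u_i(\Phi) - \Phi^{\mathrm{T}}\uu$ in \eqref{fluxfunction} and observe that on $\partial^i$ the scalar product collapses to $\phi\, u_i(\Phi)$, yielding $(1-\phi)u_i(\Phi)$. The only cosmetic difference is that the paper encodes the edge condition via the Kronecker symbol, $\Phi = \phi\,\delta_{i1}e_1 + \phi\,\delta_{i2}e_2$, whereas you write the two cases out explicitly.
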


\begin{proof}
A state $\Phi \in \partial^i$ on an axis has the representation
$\Phi= \phi \delta_{i1}e_1 + \phi \delta_{i2} e_2$,
where
$e_k, \; k=1,2$, are unit basic vectors
and $\delta_{ik}$ is the Kronecker symbol.

Then, the definition of the relative velocity $v_i(\Phi)$ reduces to
\begin{equation*}
v_i(\Phi) = u_i(\Phi) - \Phi^{\mathrm{T}} \uu = u_i(\Phi) - \phi u_i(\Phi) = (1-\phi) u_i(\Phi) ,\end{equation*}
which establishes the announcement \eqref{vedge}. 
\end{proof}

\section{Characteristic speeds} \label{sec:eig}
System \eqref{systeq} can be written in quasi-linear form as
\begin{align*} 
    \Phi_t + \JJ(\Phi) \Phi_x = 0 ,
\end{align*}
where $\JJ(\Phi)$ is the Jacobian matrix of the vector valued flux function
$\boldsymbol{f}(\Phi) := (f_1(\Phi), \dots, f_N(\Phi))^{\mathrm{T}}$. 
The structure
of the Jacobian matrix 
is examined for general $N$ in~\cite{bbb,bdmv};
for $N=2$ it becomes
\begin{align} \label{jacspec}
\JJ(\Phi) = \begin{pmatrix} \J_{11}(\Phi) & \J_{12}(\Phi) \\ \J_{21}(\Phi) & \J_{22}(\Phi) \end{pmatrix}
	= \begin{pmatrix}
      v_1+u_{11} \phi_1 & u_{12} \phi_1Ê\\
      u_{21} \phi_2 & v_2+u_{22} \phi_2Ê\\
    \end{pmatrix} ,
\end{align}
or, componentwise,
\begin{align*}
    \J_{ij}  =  v_i \delta_{i\smash{j}}
    +
    \phi_i u_{ij},
    \quad
    i,j=1, 2 ,
\end{align*}
where $\delta_{ij}$ is the Kronecker symbol,  $v_i$ is the absolute
velocity \eqref{relvel}, and $u_{ij}$ is specified as
\begin{align} \label{vijcond}
    u_{ij}  =  u_{ij}(\Phi)  =  u_i'(\Phi) - \Phi^{\mathrm{T}} \bd'(\Phi) - u_j(\Phi),
    \quad i,j=1, 2,
\end{align}
where
%
$\bd'(\Phi) = \Bigl( u_1'(\Phi) \; \; u_2'(\Phi)  \Bigr)^{\mathrm{T}}$.
%

Recall that a system is strictly hyperbolic if the Jacobian
matrix of the flux function has distinct real eigenvalues.
%
For $N=2$, a system of conservation laws \eqref{systeq}
is strictly hyperbolic if the
discriminant
\begin{align*}
\Delta_{\Phi} := \bigl(\J_{11}(\Phi) - \J_{22}(\Phi)\bigr)^2 - 4
\J_{12} (\Phi) \J_{21} ( \Phi) 
\end{align*}
of the Jacobian matrix \eqref{jacspec} of the flux function $\boldsymbol{f}$ is positive. 
For a hindered settling factor given by \eqref{videf} strict
hyperbolicity holds for identical exponents $n_1=n_2$, see~\cite{bktw}, 
which is proofed by showing algebraically that
$\Delta_{\Phi} > 0$
for $\Phi$ in the interior of the phase space  $\mathcal{D}_{\Phimax}$ for a specification (S3) with $\phimax = 1$; moreover, strict hyperbolicity also holds
for the case 
with different exponents $n_1 \neq n_2$. This can be shown by a
straightforward calculation \cite{bbb}, which yields
the discriminant composed by a sum of a
square and a positive term
\begin{eqnarray}
\Delta_{\Phi} = [(n_1 \phi_1 - 1) u_1(\Phi) - (n_2 \phi_2 - 1)u_2(\Phi)]^2 + 4 n_1 n_2
\phi_1 \phi_2 u_1(\Phi) u_2(\Phi) 
  \label{discrim2} .
\end{eqnarray}
%
%
This term is positive because of conditions (S1) and (S2) together with 
the bounds $\phi_1, \phi_2 \ge 0, \phi = \phi_1 + \phi_2 \le 1$ given by definition of the invariance domain $\mathcal{D}_{\Phimax}$
and the definition of $u_i(\Phi)$.
The positivity of the discriminant $\Delta_{\Phi}>0$ indicates strict hyperbolicity of the system \eqref{systeq}.
%


In the sequel, the structure of the eigensystem 
of the Jacobian matrix $\JJ$ of the flux function~\eqref{fluxfunction}
 is derived. 
The eigenvalue calculation is tedious 
but straightforward, leading to the following Lemma. 

\begin{lemma}\label{thm:eigen}
The eigenvalues of the Jacobian matrix \eqref{jacspec} 
are calculated as
\begin{align*} \begin{split}
\lambda_{1,2} = \fracd{1}{2} \bigl[v_1 + v_2 \bigr] 
	- \fracd{1}{2} \bigl[ n_1 \phi_1 u_1 + n_2\phi_2 u_2 \bigr] \pm \fracd{1}{2} \sqrt {\Delta_{\Phi}},
\end{split}
\end{align*}
where $\Delta_{\Phi}$ is the discriminant 
that has the form \eqref{discrim2}.
\end{lemma}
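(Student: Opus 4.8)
The plan is to treat this as a direct eigenvalue computation for the $2\times2$ matrix \eqref{jacspec}. Writing $\mathrm{tr}\,\JJ = \J_{11}+\J_{22}$ and $\det\JJ = \J_{11}\J_{22}-\J_{12}\J_{21}$, the eigenvalues are
\[
\lambda_{1,2} = \tfrac{1}{2}\,\mathrm{tr}\,\JJ \pm \tfrac{1}{2}\sqrt{(\mathrm{tr}\,\JJ)^2 - 4\det\JJ},
\]
and since $(\mathrm{tr}\,\JJ)^2 - 4\det\JJ = (\J_{11}-\J_{22})^2 + 4\J_{12}\J_{21}$, it suffices to establish two identities: first, that $\mathrm{tr}\,\JJ = v_1+v_2 - (n_1\phi_1 u_1 + n_2\phi_2 u_2)$, which produces the non-radical part; and second, that the radicand coincides with the discriminant $\Delta_{\Phi}$ in the form \eqref{discrim2}. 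The single structural ingredient driving both simplifications is the derivative identity furnished by the Richardson--Zaki closure \eqref{uidef}, namely $u_i'(1-\phi) = -(n_i-1)u_i$ (equivalently $u_i'(1-\phi)-u_i = -n_i u_i$, the quantity appearing in \eqref{uineq}).

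For the trace I would sum the diagonal entries of \eqref{jacspec}, giving $\mathrm{tr}\,\JJ = v_1 + v_2 + \phi_1 u_{11} + \phi_2 u_{22}$, and then substitute $u_{ii} = u_i' - \Phi^{\mathrm{T}}\bd' - u_i$ from \eqref{vijcond}. Collecting terms yields the compact intermediate form $\phi_1 u_{11} + \phi_2 u_{22} = (1-\phi)\,\Phi^{\mathrm{T}}\bd' - \Phi^{\mathrm{T}}\uu$. Applying the derivative identity collapses $(1-\phi)\,\Phi^{\mathrm{T}}\bd'$ to $-[\phi_1(n_1-1)u_1 + \phi_2(n_2-1)u_2]$, and combining this with $-\Phi^{\mathrm{T}}\uu = -(\phi_1 u_1 + \phi_2 u_2)$ gives exactly $\phi_1 u_{11} + \phi_2 u_{22} = -(n_1\phi_1 u_1 + n_2\phi_2 u_2)$, which is the claimed non-radical term. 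This step is short and essentially forced.

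The discriminant is the laborious part and the main obstacle. Computing $\J_{11}-\J_{22}$ and inserting the same derivative identity, I expect the diagonal difference to split as $\J_{11}-\J_{22} = -\bigl[(n_1\phi_1-1)u_1 - (n_2\phi_2-1)u_2\bigr] + R$, where the residual $R$ is proportional to $\phi_1\phi_2/(1-\phi)$; a parallel evaluation of the off-diagonal product $\J_{12}\J_{21} = \phi_1\phi_2\,u_{12}u_{21}$ likewise carries factors of $1/(1-\phi)$ and $1/(1-\phi)^2$. The heart of the calculation is to verify that upon forming $(\J_{11}-\J_{22})^2 + 4\J_{12}\J_{21}$ all cross terms involving $R$ and all contributions proportional to $1/(1-\phi)$ and $1/(1-\phi)^2$ cancel exactly, leaving the clean sum $\bigl[(n_1\phi_1-1)u_1 - (n_2\phi_2-1)u_2\bigr]^2 + 4 n_1 n_2 \phi_1\phi_2 u_1 u_2$ of \eqref{discrim2}. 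This cancellation is not generic: it hinges on the power-law structure of \eqref{uidef} through $u_i'(1-\phi)=-(n_i-1)u_i$, which is precisely what links the exponents $n_i$ appearing in the diagonal and off-diagonal entries so that the singular factors drop out. Once \eqref{discrim2} is in hand, inserting it under the root together with the trace computed above completes the proof; a single numerical evaluation of $\JJ$ at an interior point can serve as a consistency check on the sign conventions and the algebra.
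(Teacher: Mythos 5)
Your proposal is correct and is essentially the proof the paper intends: the paper omits it as a ``tedious but straightforward'' eigenvalue calculation, which is precisely your route via the quadratic formula, the trace identity $\mathrm{tr}\,\JJ = v_1+v_2-(n_1\phi_1 u_1+n_2\phi_2 u_2)$, and the reduction of $(\J_{11}-\J_{22})^2+4\J_{12}\J_{21}$ to \eqref{discrim2}, the latter being the computation the paper attributes to \cite{bbb}. All the cancellations you anticipate do occur (with $m_i=(n_i-1)u_i$, the $1/(1-\phi)^2$ terms cancel outright, the $1/(1-\phi)$ cross terms leave the regular remainder $-4\phi_1\phi_2(m_2-m_1)(u_2-u_1)$, and this combines with $4\phi_1\phi_2(m_1+u_2)(m_2+u_1)$ into $4\phi_1\phi_2(m_1+u_1)(m_2+u_2)=4n_1n_2\phi_1\phi_2 u_1u_2$); only your aside that the cancellation ``is not generic'' is slightly overstated, since the singular terms cancel for any $u_i$ depending solely on the total concentration $\phi$, the Richardson--Zaki power law serving merely to give the surviving terms the clean form with the exponents $n_1$, $n_2$.
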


%
%
%
%
%

With help of this eigenvalue specification, 
the characterization of the contact manifold
can be completed 
by the following Theorem, which is formulated for any set $\mathcal{C}(\Phic)$,
such that it applies particularly for the sets $\mathcal{C}(\Phis)$ and $\mathcal{C}(\Phi^\infty)$.

\begin{theorem} \label{theo:eigval}
The eigenvalues for any state $\Phi \in \mathcal{C}(\Phic)$ can be specified as
\begin{align} \label{eq:lambda2onC}
	\lambda_1(\Phic)= v_1(\Phic) - \sqrt{\Delta_{\Phic}} \quad \text{and} \quad
	\lambda_2(\Phi) = v_1(\Phic) .
\end{align}
Therefore, any set 
$\mathcal{C}(\Phic)$ is a contact manifold with respect to the second characteristic family.
\end{theorem}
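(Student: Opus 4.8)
The plan is to substitute the defining properties of the contact manifold directly into the eigenvalue formula of Lemma~\ref{thm:eigen} and watch the discriminant collapse to a perfect square. First I would record that on $\mathcal{C}(\Phic)$ the defining equality $v_1(\Phi)=v_2(\Phi)$ holds, so that property~\eqref{uvequiv} yields $u_1(\Phi)=u_2(\Phi)$; write $u$ for this common value of the relative velocities. Since $v_1(\Phi)=v_2(\Phi)=v_1(\Phic)$ along the whole manifold, the first bracket $\tfrac{1}{2}[v_1+v_2]$ in the eigenvalue formula already equals $v_1(\Phic)$, and the middle bracket becomes $\tfrac{1}{2}u(n_1\phi_1+n_2\phi_2)$.

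The key algebraic step is to evaluate the discriminant~\eqref{discrim2} under $u_1=u_2=u$. Substituting, the squared bracket collapses to $u^2(n_1\phi_1-n_2\phi_2)^2$, while the remaining term becomes $4n_1n_2\phi_1\phi_2u^2$; the latter flips the sign of the cross term in the expanded square, so that
\begin{align*}
\Delta_\Phi = u^2\bigl[(n_1\phi_1-n_2\phi_2)^2 + 4n_1n_2\phi_1\phi_2\bigr] = u^2\,(n_1\phi_1+n_2\phi_2)^2 .
\end{align*}
Because $u=v_{\infty i}(1-\phi)^{n_i-1}>0$ in the interior and $n_1,n_2>1$, $\phi_1,\phi_2\ge0$, the quantity $n_1\phi_1+n_2\phi_2$ is nonnegative, so I may take $\sqrt{\Delta_\Phi}=u\,(n_1\phi_1+n_2\phi_2)$ with the positive sign. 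This fixes the assignment of the roots, making the $+$ branch the larger eigenvalue $\lambda_2$.

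Next I would insert this into $\lambda_{1,2}=v_1(\Phic)-\tfrac{1}{2}u(n_1\phi_1+n_2\phi_2)\pm\tfrac{1}{2}\sqrt{\Delta_\Phi}$. For the $+$ branch the two half-terms cancel, leaving $\lambda_2(\Phi)=v_1(\Phic)$ for every $\Phi\in\mathcal{C}(\Phic)$; for the $-$ branch they add, giving $\lambda_1(\Phi)=v_1(\Phic)-u(n_1\phi_1+n_2\phi_2)=v_1(\Phic)-\sqrt{\Delta_\Phi}$, which at the representative state reads $\lambda_1(\Phic)=v_1(\Phic)-\sqrt{\Delta_{\Phic}}$, as claimed. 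Finally, since $\lambda_2$ is constant along the manifold and equals the shock speed $v_1(\Phic)$ furnished by Lemma~\ref{lemcont}, Definition~\ref{def:ContactMan} with $i=2$ identifies $\mathcal{C}(\Phic)$ as a second-family contact manifold.

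The main obstacle is purely the perfect-square identity for the discriminant: one must notice that the coincidence $u_1=u_2$ is exactly what converts the cross term so that $(n_1\phi_1-n_2\phi_2)^2+4n_1n_2\phi_1\phi_2$ becomes $(n_1\phi_1+n_2\phi_2)^2$. A secondary point requiring care is the sign of the root: confirming $u>0$ in the interior is what guarantees $\sqrt{\Delta_\Phi}=+u(n_1\phi_1+n_2\phi_2)$, and hence that it is $\lambda_2$ (not $\lambda_1$) which stays constant, pinning the contact to the second family rather than the first.
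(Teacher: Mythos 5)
Your proof is correct and takes essentially the same route as the paper's: both substitute $v_1(\Phi)=v_2(\Phi)=v_1(\Phic)$ (hence $u_1=u_2$ via \eqref{uvequiv}) into the eigenvalue formula of Lemma~\ref{thm:eigen}, use the identity $(a-b)^2+4ab=(a+b)^2$ to show the middle bracket $\tfrac{1}{2}\bigl[n_1\phi_1u_1+n_2\phi_2u_2\bigr]$ equals $\tfrac{1}{2}\sqrt{\Delta_\Phi}$, so the $\pm$ branches yield \eqref{eq:lambda2onC}, and then attribute the manifold to the second family by combining the constancy of $\lambda_2$ with the shock speed \eqref{sv1p} of Lemma~\ref{lemcont} and Definition~\ref{def:ContactMan}. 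Your explicit check that $\sqrt{\Delta_\Phi}=u\,(n_1\phi_1+n_2\phi_2)$ with the positive sign (from $u\ge0$, $n_i\phi_i\ge0$) is a slightly more careful justification of the root assignment than the paper gives, but it is the same argument.
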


\begin{proof}
Since any state $\Phi$ in the set $\mathcal{C}(\Phic)$ 
%
is (by definition) characterized by the property
 $v_1(\Phi)=v_2(\Phi)=v_1(\Phic)$,
the eigenvalues 
become
\begin{align*}
    \lambda_{1,2}(\Phi) 
    & = v_1(\Phic) - \fracd{1}{2}\bigl[ n_1 \phi_1 u_1(\Phi) + n_2 \phi_2 u_2(\Phi) \bigr]
    \pm \fracd{1}{2} \sqrt{\Delta_{\Phi}} \\
    & = v_1(\Phic) - \fracd{1}{2} \sqrt{\Delta_{\Phi}} \pm \fracd{1}{2} \sqrt{\Delta_{\Phi}}  ,
\end{align*}
giving the pair of eigenvalues \eqref{eq:lambda2onC}.
%
%
%
Here, the last step is justified 
by property \eqref{uvequiv}, {\it i.e.} the equivalence of the equalities $v_1(\Phi)=v_2(\Phi)$ and $u_1(\Phi)=u_2(\Phi)$:
Namely,
one can relate the term in the parenthesis to the discriminant \eqref{discrim2} as
\begin{align*}
    & \bigl[ n_1 \phi_1 u_1  + n_2 \phi_2 u_2 \bigr]^2 
    = [ n_1 \phi_1 u_1  -  n_2 \phi_2 u_2 ]^2 + 4 n_1 n_2 \phi_1 \phi_2 u_1 u_2  
    = \Delta_{\Phi} ,
\end{align*}
which is valid in the special case when $u_1(\Phi)=u_2(\Phi)$.

%
%
%
%

The association of the contact manifold to the second family can be seen by the fact that 
the eigenvalue $\lambda_2$, according to the established values in \eqref{eq:lambda2onC},
is constant on $\mathcal{C}(\Phic)$, $\lambda_2(\Phi) \equiv v_1(\Phic)$ for all $\Phi \in \mathcal{C}(\Phic)$.
%
With the shock speed established in \eqref{sv1p} in Lemma \ref{lemcont},
for the connection of any two states on $\mathcal{C}(\Phic)$,
one gets
\begin{align*}
    \lambda_2(\Phi^-) = \sigma(\Phi^-, \Phi^+) = \lambda_2(\Phi^+) 
\end{align*}
for all $\Phi^-, \Phi^+ \in \mathcal{C}(\Phic)$ 
establishing that any set $\mathcal{C}(\Phic)$ is a 
contact manifold. 
\end{proof}


Theorem \ref{theo:eigval} applies to both contact manifold $\mathcal{C}(\Phis)$ and $\mathcal{C}(\Phimax)$. 
For the line $\mathcal{C}(\Phimax)$ the contact manifold is in addition characteristic with respect to the first characteristic family.


For $\Phi \in \mathcal{C}(\Phis)$ 
the smaller eigenvalue $\lambda_1$ depends on the discriminant
and only the bigger eigenvalue $\lambda_2(\Phi) \equiv v_1(\Phis)$ is constant, 
Thus, a contact manifold $\mathcal{C}(\Phic)$ is an integral curve of the second family.
On the maximum packing manifold for $\Phi \in \partial^{\infty}=\mathcal{C}(\Phimax)$, where $\phi_1 + \phi_2 = 1$ holds, both eigenvalues vanish:
$\lambda_1(\Phi) = \lambda_2(\Phi) = 0$.


Theorem \ref{theo:eigval} states explicit expressions for the eigenvalues on the contact manifold $\mathcal{C}(\Phis)$ and $\mathcal{C}(\Phimax)$, 
which form part of the Hugoniot locus of the origin as identified in Lemma \ref{hlorigin2}.
The remaining eigenvalues along the Hugoniot locus of the origin
can be evaluated directly from the general eigenvalues according to 
Lemma~\ref{thm:eigen}.
For $\Phi$ on an edge $\partial^i, \; i = 1,\,2$, {\it i.e.}, $\phi_i = 0$ for some $i = 1,\,2$, as specified in \eqref{def:boundaries},
 the discriminant reduces to 
  $\Delta_\Phi = [ u_{3-i} (\Phi)(n_{3-i} \phi_{3-i} - 1) ]^2$, 
%
where the subindex $3-i$ becomes $2$ on the axis $\partial^1$ and $1$ on the axis $\partial^2$.
A simple calculation shows that for $\Phi = (\phi,\,0)^T \in \partial^1$ the eigenvalues are
\begin{eqnarray*}
\lambda_{1,\,2}(\Phi) &\,\,\,=\,\,\,& \frac{1}{2} \Big\{ \big[(1 \mp 1) - (2 + (1 \mp 1)n_1 )\phi \big]u_1(\Phi) + (1 \pm 1)u_2(\Phi) \Big\} \\
                &\,\,\,=\,\,\,& \begin{cases}
                     u_2(\Phi) - \phi u_1(\Phi),  \\ 
                     (1 - (1+n_1)\phi) u_1(\Phi),    
                    \end{cases}
\!\!\!\!\!\!
\end{eqnarray*}
and for $\Phi = (0,\,\phi)^T \in \partial^2$ the eigenvalues are
\begin{eqnarray*}
\lambda_{1,\,2}(\Phi) &\,\,\,=\,\,\,& \frac{1}{2} \Big\{ (1 \mp 1)u_1(\Phi) + \big[(1 \pm 1) - (2 + (1 \pm 1)n_2 )\phi \big]u_2(\Phi) \Big\} \\
                &\,\,\,=\,\,\,& \begin{cases}
                     (1 - (1+n_2)\phi) u_2(\Phi), \\ 
                     u_1(\Phi) - \phi u_2(\Phi) .    
                    \end{cases}                   
\!\!\!\!\!\!
\end{eqnarray*}
%
%
Within these eigenvalue characterizations 
we can distinguish the two types 
\begin{eqnarray}
\label{lam12:phiab}
                &\,\,\,  \,\,\,& \begin{cases}
                     \lambda_a(\Phi) \,\,:=\,\, (1 - (1+n_i)\phi) u_i(\Phi), & \quad\mbox{for}\quad \Phi \in \partial_i, \; i=1,2, \\ 
                     \lambda_b(\Phi) \,\,:=\,\, u_{3-i}(\Phi) - \phi u_i(\Phi), & \quad\mbox{for}\quad \Phi \in \partial_i, \; i=1,2 ,    
                    \end{cases}                   
\!\!\!\!\!\!
\end{eqnarray}
where the subindex $3-i$ denominates the complementary index.
%
%
The notation $\lambda_a(\Phi), \lambda_b(\Phi)$
with new subindices instead of $\lambda_1(\Phi), \lambda_2(\Phi)$
is used since the order
\begin{align} \label{lambdaorder}
	\lambda_a(\Phi) = \lambda_1(\Phi)  \le \lambda_2(\Phi) = \lambda_b(\Phi) 
\end{align}
cannot be always guaranteed.
However, the notation $\lambda_1$, $\lambda_2$ is used whenever 
the order 
\eqref{lambdaorder}
can be assured,
{\it e.g.} at the origin, where
\begin{equation*}
	\lambda_1(O) = \lambda_a(O) = v_{\infty 2} <  v_{\infty 1} = \lambda_b(O)  = \lambda_2(O) .
\end{equation*}
%

%
Generally, order changes can occur on both axes, as can be seen from
Fig.~\ref{fig:edgeanalysis}~(b). 
The order \eqref{lambdaorder} holds only in the absence of such order changes.
However, the occurrence of coincidence points does not affect the subsequent classification.
In the sequel, it is shown how the eigenvalues $\lambda_a(\Phi)$ and $\lambda_b(\Phi)$ 
assume the role of the eigenvalue of the first or the second family
in dependence of the value $\phi$.

\subsection{Inflection curves}

The highly non-linear structure of the flux function 
makes it difficult, if not impossible to obtain an explicit formula for inflection manifolds \cite{Cido}. 
However, a characterization for states on the coordinate axes of the phase space can be obtained,
since the inflection points there correspond to those of scalar equations. 
Namely, the eigenvalue $\lambda_a(\Phi)$ has the same form as the first derivative of the scalar flux function ($N=1$),
whereas there is no correspondence for the additional eigenvalue $\lambda_b(\Phi)$, which emerges for the $2 \times 2-$system ($N=2$). 
%
%
%
%
The derivative
\begin{align} \label{lambdader}
	\lambda_a'(\phi) = n_i \bigl((1+n_i)\phi - 2\bigr)u_i(\Phi)/(1-\phi), \quad i=1,2
\end{align}
is well defined for $\phi \in [0,\,1)$.
From \eqref{lambdader}
we have that $\lambda_a'(\phi)$ may vanish at $\phi = 1$ and at $\phiinf = 2/(1+n_i)$; the first case is less relevant because it represents a state on a vertex of the phase space. 
We have that $\phiinf \in (0,\,1)$ since $n_i$ is larger than one due to condition (S2).
Thus the state $\phiinf$ represents an inflection point
and the states $\Phiinf_i$ such that $\phi_i = \phiinf$ and $\phi_j = 0$ for $i \neq j \in  \{�1,2 \}$ 
belong to an inflection manifold.
The pertinence of the inflection points to the first characteristic family can be shown for general parameter settings.

\begin{lemma}\label{cla:inflection}
There is at least one inflection point on each axis of the phase space \eqref{phasespace},
having locations $\Phiinf_1 := \bigl( 2/(1+n_1),\,0 \bigr)^T$ and $\Phiinf_2 := \bigl( 0,\,2/(1+n_2) \bigr)^T$.
Moreover, the corresponding eigenvalues belong to the first characteristic family.
\end{lemma}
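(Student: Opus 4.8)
The plan is to reduce the two-dimensional inflection condition of Definition~\ref{def:inflection} to the scalar equation $\lambda_a'(\phi)=0$ already recorded in \eqref{lambdader}, and then to settle the family membership by directly comparing $\lambda_a$ and $\lambda_b$ at the resulting point.

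First I would exploit the triangular structure of the Jacobian \eqref{jacspec} on the axes. On $\partial^1$ one has $\phi_2=0$, so the lower-left entry $\J_{21}=u_{21}\phi_2$ vanishes and $\JJ$ is upper triangular; symmetrically, on $\partial^2$ the upper-right entry $\J_{12}=u_{12}\phi_1$ vanishes and $\JJ$ is lower triangular. Hence on $\partial^i$ the two eigenvalues are the diagonal entries, and a short substitution of $u_i=v_{\infty i}(1-\phi)^{n_i-1}$ identifies the diagonal entry of scalar-flux type with $\lambda_a=(1-(1+n_i)\phi)u_i$ and the remaining diagonal entry with $\lambda_b$, in agreement with \eqref{lam12:phiab}. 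For a triangular matrix with distinct eigenvalues the eigenvector belonging to $\lambda_a$ is $(1,0)^{\mathrm{T}}$ on $\partial^1$ and $(0,1)^{\mathrm{T}}$ on $\partial^2$; in both cases it is tangent to the axis.

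This tangency is the decisive point. Since the eigenvector $r_a$ of $\lambda_a$ points along the axis, the inner product $\nabla\lambda_a\cdot r_a$ reduces to the derivative of $\lambda_a$ taken along the axis, namely $\lambda_a'(\phi)$: moving in the direction of $r_a$ keeps the complementary coordinate at zero, so no transversal contribution survives. Thus $\nabla\lambda_a\cdot r_a=0$ is equivalent to $\lambda_a'(\phi)=0$. Reading off \eqref{lambdader} and using $u_i>0$ together with $1-\phi>0$ for $\phi\in(0,1)$, the only root in the open interval is $\phiinf=2/(1+n_i)$, which yields the stated locations $\Phiinf_1$ and $\Phiinf_2$; the boundary root $\phi=1$ is discarded as a vertex.

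It remains to show that these inflection points belong to the first family, i.e. that $\lambda_a=\lambda_1$ there. Substituting $\phi=\phiinf=2/(1+n_i)$ into \eqref{lam12:phiab} gives $\lambda_a=(1-2)u_i=-u_i<0$, whereas $\lambda_b=u_{3-i}-\phiinf u_i$. The desired inequality $\lambda_a<\lambda_b$ then amounts to $(\phiinf-1)u_i<u_{3-i}$, which holds because the left-hand side is negative ($\phiinf<1$ and $u_i>0$) while the right-hand side is positive ($u_{3-i}>0$). Hence $\lambda_a$ is the smaller eigenvalue at $\Phiinf_i$ and the inflection is attached to the first characteristic family. I expect the only genuine obstacle to be the first step, namely making precise that the scalar derivative $\lambda_a'(\phi)$ really realises the directional derivative of Definition~\ref{def:inflection}; this rests entirely on the eigenvector tangency forced by the triangular form of $\JJ$ on the axes, after which the substitutions and sign comparisons are routine.
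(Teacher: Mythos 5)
Your proposal is correct and follows essentially the same route as the paper: locate the inflection points as the zeros of $\lambda_a'(\phi)$ in \eqref{lambdader}, then attach them to the first family by checking $\lambda_a(\Phiinf_i)<\lambda_b(\Phiinf_i)$ via the sign argument forced by (S2) (your computation $\lambda_a(\Phiinf_i)=-u_i<0\le\lambda_b(\Phiinf_i)+\phiinf u_i - u_{3-i}+u_{3-i}$ is just a rearrangement of the paper's identity $\lambda_a(\Phiinf_i)+\phiinf u_i=\frac{1-n_i}{1+n_i}u_i$). Your only addition is to make explicit, via the triangular structure of $\JJ$ on the axes and the tangency of the eigenvector of $\lambda_a$, why Definition~\ref{def:inflection} reduces to the scalar condition $\lambda_a'(\phi)=0$ — a point the paper asserts informally before the lemma ("the inflection points there correspond to those of scalar equations") — and your justification of it is sound.
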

%

\begin{proof}
The location of the inflection points on the axes is obtained by finding the zeros of the corresponding eigenvalue derivatives \eqref{lambdader}.

The attribution of the inflection points to a certain characteristic family is done by comparing the magnitudes 
of the eigenvalues $\lambda_a(\Phiinf_i)$ and $\lambda_b(\Phiinf_i)$, 
both given by \eqref{lam12:phiab}:
Substituting $\phiinf = 2/(1+n_i)$
one observes that 
$$\lambda_a(\Phiinf_i) + \phiinf u_i(\Phiinf_i) = (1 - n_i \phiinf) u_i(\Phiinf_i) = \frac{1 - n_i}{1 + n_i} u_i(\Phiinf_i). $$
Since $u_i$, $u_j$ are nonnegative and $(1 - n_i)/(1 + n_i)$ is always negative
by the parameter setting (S2), we have that 
\begin{align*}
	\lambda_a(\Phiinf_i) < u_j(\Phiinf_i) - \phiinf u_i(\Phiinf_i) = 	\lambda_b(\Phiinf_i) .
\end{align*}
Thus, in the inflection points on the axes, the eigenvalue $\lambda_a$ corresponds to the first (smaller) eigenvalue $\lambda_1$
such that 
the eigenvalue order \eqref{lambdaorder} holds.
\end{proof}

\subsection{Eigenvectors } 

The Jacobian matrix \eqref{jacspec} with components\ \eqref{vijcond} can be written as 
a rank two modification 
\begin{align} \label{ABC}
\JJ = D + \sum_{k=1}^2 a_k b_k^{\mathrm{T}} = D+BA^{\mathrm{T}}
\end{align}
of the diagonal matrix $D=\mathrm {diag}(v_1, v_2) \in
\mathbb{R}^{2 \times 2}$,
where
\begin{align} \label{ABdef}
%
  B=\begin{pmatrix}
    \phi_1 u_1' (\Phi) & - \phi_1 \\
    \phi_2 u_2' (\Phi) & - \phi_2   
  \end{pmatrix},
  \quad
  A=\begin{pmatrix}
    1 & u_1 +  \Phi^{\mathrm{T}} \uu'(\Phi)  \\
    1 & u_2 + \Phi^{\mathrm{T}} \uu'(\Phi) 
  \end{pmatrix} .
\end{align}
%
%
In a rank two modification one 
introduces column
vectors $a_k, b_k \in \mathbb{R}^2, \; k \in \{ 1, 2 \}$, in 
matrices $A, B \in \mathbb{R}^{2 \times 2}$ which both have rank two.
%
%
The formula for the components of  a right eigenvector 
can be deduced from secular equations and is
given as a function of eigenvalues \cite{bdmv}
\begin{align} \label{rij}
    r_{ij}(\lambda) = \fracd{1}{v_j - \lambda} \biggl[ b_j^1 \sum_{k=1}^2
\fracd{a_k^1 b_k^2}{v_k - \lambda}
    - b_j^2 \biggl( 1 + \sum_{k=1}^N \fracd{a_k^1 b_k^1}{v_k - \lambda}
    \biggr)
      \bigg] ,
\end{align}
where the parameters
\begin{align} \label{abj}
	a_j^1= 1, \quad a_j^2 = u_j(\Phi) + \Phi^{\mathrm{T}} \uu', \quad
	b_j^1 = \phi_j u_j', \quad b_j^2 = - \phi_j,
\end{align}
correspond to the entries of the matrices $A$ and $B$ in \eqref{ABdef}.
Whereas formula \eqref{rij} holds for general rank two modifications of form \eqref{ABC},
which are valid for system of arbitrary size, the following Lemma breaks it down for the considered model with $N=2$.



\begin{lemma}
The right eigenvectors 
of the Jacobian matrix \eqref{jacspec}  
are functions of the corresponding 
eigenvalues $\lambda_i(\Phi), i=1,2$, given as
\begin{align} \label{eigvector}
    r(\Phi, \lambda_i(\Phi)) = \begin{pmatrix}
        \phi_1 (v_2(\Phi) - \lambda_i(\Phi)) + \phi_1 \phi_2 (u_2'(\Phi) - u_1'(\Phi)) \\
        \phi_2 (v_1(\Phi) - \lambda_i(\Phi)) - \phi_1 \phi_2 (u_2'(\Phi) - u_1'(\Phi))
    \end{pmatrix}. 
\end{align}
\end{lemma}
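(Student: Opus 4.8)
The plan is to obtain \eqref{eigvector} by simplifying the secular eigenvector formula \eqref{rij} specialized to $N=2$, rather than by verifying the eigenvalue equation from scratch. First I would insert the entries \eqref{abj}, namely $a_k^1=1$, $a_k^2 = u_k + \Phi^{\mathrm{T}}\uu'$, $b_k^1=\phi_k u_k'$ and $b_k^2=-\phi_k$, into \eqref{rij}. A pleasant feature is that only $a_k^1$ occurs in \eqref{rij}: the entries $a_k^2$ never appear, so the cumbersome combination $\Phi^{\mathrm{T}}\uu'$ plays no role in the right eigenvector. Since $a_k^1=1$ for both $k$, the two sums reduce to $\sum_{k}b_k^2/(v_k-\lambda)=-\sum_k \phi_k/(v_k-\lambda)$ and $\sum_{k}b_k^1/(v_k-\lambda)=\sum_k \phi_k u_k'/(v_k-\lambda)$.

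Next I would collect the bracket. Denoting the $j$-th component by $r_j(\lambda)$ and factoring $\phi_j/(v_j-\lambda)$, the two sums merge into a single expression,
\begin{equation*}
 r_j(\lambda) = \frac{\phi_j}{v_j-\lambda}\left[\,1 + \sum_{k=1}^{2}\frac{\phi_k\,(u_k'-u_j')}{v_k-\lambda}\,\right], \qquad j=1,2 .
\end{equation*}
The decisive simplification is that the $k=j$ summand vanishes identically, leaving only the complementary index $k=3-j$. Placing the surviving terms over the common denominator $(v_1-\lambda)(v_2-\lambda)$ then gives
\begin{equation*}
 r_j(\lambda) = \frac{\phi_j\bigl[(v_{3-j}-\lambda)+\phi_{3-j}(u_{3-j}'-u_j')\bigr]}{(v_1-\lambda)(v_2-\lambda)}, \qquad j=1,2 .
\end{equation*}

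Because an eigenvector is determined only up to a nonzero scalar, I would clear the factor $[(v_1-\lambda)(v_2-\lambda)]^{-1}$, which is shared by both components. For $j=1$ this yields $\phi_1\bigl[(v_2-\lambda)+\phi_2(u_2'-u_1')\bigr]$, and for $j=2$, using $u_1'-u_2'=-(u_2'-u_1')$, it yields $\phi_2\bigl[(v_1-\lambda)-\phi_1(u_2'-u_1')\bigr]$; evaluated at $\lambda=\lambda_i(\Phi)$ this is precisely \eqref{eigvector}.

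The step I expect to require the most care is the bookkeeping around the apparent poles at $\lambda=v_j$: the raw secular expression is singular there, but clearing the common denominator produces the polynomial form \eqref{eigvector}, which is regular throughout $\mathcal{D}_{\Phimax}$, so the formula persists by continuity even where $v_j=\lambda_i$. As an independent check I would substitute \eqref{eigvector} back into $(\JJ-\lambda_i I)r=0$ using the explicit Jacobian \eqref{jacspec} with components \eqref{vijcond}; confirming that the first row vanishes, with the second following from $\det(\JJ-\lambda_i I)=0$ (i.e.\ from $\lambda_i$ being an eigenvalue by Lemma~\ref{thm:eigen}), gives a self-contained verification that the rescaling introduces no spurious factors.
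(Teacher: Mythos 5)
Your proposal is correct and follows essentially the same route as the paper's own proof: both insert the parameters \eqref{abj} into the secular formula \eqref{rij}, reduce to the compact form $r_j = \frac{\phi_j}{v_j-\lambda}\bigl[1+\sum_{k}\frac{\phi_k(u_k'-u_j')}{v_k-\lambda}\bigr]$ (the paper writes this with the abbreviation $\Lambda_j = \phi_j/(v_j-\lambda)$), and then clear the common factor $(v_1-\lambda)(v_2-\lambda)$ to obtain \eqref{eigvector}. Your added remarks --- the continuity argument at the apparent poles $\lambda = v_j$ and the back-substitution check into $(\JJ-\lambda_i I)r=0$ --- are sensible extras the paper omits, but they do not change the substance of the argument.
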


\begin{proof}
By the abbreviation
\begin{align*}
	\Lambda_j 
	= \fracd{\phi_j}{v_j - \lambda} ,
\end{align*}
one can rewrite the eigenvector \eqref{rij} with parameters \eqref{abj} compactly as
\begin{align*}
	r_{*j}(\Phi, \lambda(\Phi)) = \Lambda_j 
	\Biggl[�1+ \sum_{k=1}^2 \Lambda_k 
	 (u_k' -u_j') \Biggr] , \quad j = 1, 2 ,
\end{align*}
with the components
\begin{align*}
	r_{*1} = 
	\Lambda_1
	\bigl[�1 + \Lambda_2 
	(u_2' - u_1') \bigr]
	= \fracd{\phi_2}{v_2 - \lambda} + \fracd{\phi_1 \phi_2 (u_1' - u_2')}{(v_1 - \lambda)(v_2 - \lambda)}, \\
	r_{*2} = 
		\Lambda_2 
		\bigl[�1 + \Lambda_1 
		(u_1' - u_2') \bigr]
	= 
	\fracd{\phi_1}{v_1 - \lambda} + \fracd{\phi_1 \phi_2 (u_2' - u_1')}{(v_1 - \lambda) (v_2 - \lambda)} .
\end{align*}
Multiplying by $(v_1-\lambda)(v_2 - \lambda)$ the eigenvectors
get the form \eqref{eigvector}.
\end{proof}


Special eigenvectors, in particular those for the values of the Hugoniot locus of the origin, 
can be obtained from either exploiting the structure  of the Jacobian matrix \eqref{jacspec} or from using the analytical form of the eigenvectors \eqref{eigvector}.
%
%
For instance, the eigenvectors on a set $\mathcal{C}(\Phic)$ that correspond to the second eigenvalue
$\lambda_2(\Phic) = v_1(\Phic)$ have the form
\begin{align} \label{ev2}
	r_2(\Phic) = \begin{pmatrix} 1 \\ -1 \end{pmatrix}.
\end{align}
In the origin one has $r_1(O)=(0, \; 1)^{\mathrm{T}}$ and $r_2(O)=(1, \; 0)^{\mathrm{T}}$. 
%



\subsection{Illustration of a benchmark example}

For illustration, a benchmark example ({\bf Example 1}) is considered 
with parameter setting 
\begin{align} \label{parametersetting1}
v_{\infty 1} = 1,\, v_{\infty 2} = 1/2,\qquad and 
\qquad n_1 = 4,\, n_2 = 3, 
\end{align}
that satisfies specifications (S1)--(S3).
%
The integral curves 
in the $\phi_1 \phi_2-$coordinate plane are
shown in Fig.~\ref{fig:curves2x2} (left). 
%
The arrows point into the direction of increasing eigenvalues.
The first characteristic family is crossing all lines of constant $\phi$ 
and the second family is connecting the axes. 
It can be recognized that 
the second family is genuinely nonlinear,
whereas genuine nonlinearity 
of the first family 
is lost at an inflection manifold,
where the corresponding eigenvalues take their minimum.
%

The direction of increasing eigenvalues of the second 
characteristic family switches at the contact manifold.
%
%
%
This direction
switch impacts on the solution structure of the Riemann problem
$\RP(O,\Phi^+)$, depending on which side of the contact
manifold the right state $\Phi^+$ is positioned, see Fig.~\ref{fig:curves2x2} (right) 
for results of simulations by a finite difference method. Only a slight change of the Riemann data
provokes a fundamental change of the solution path in the phase space.

\begin{figure}[ht]
\begin{center} 
\begin{tabular}{c}
\includegraphics[width=0.49\textwidth]{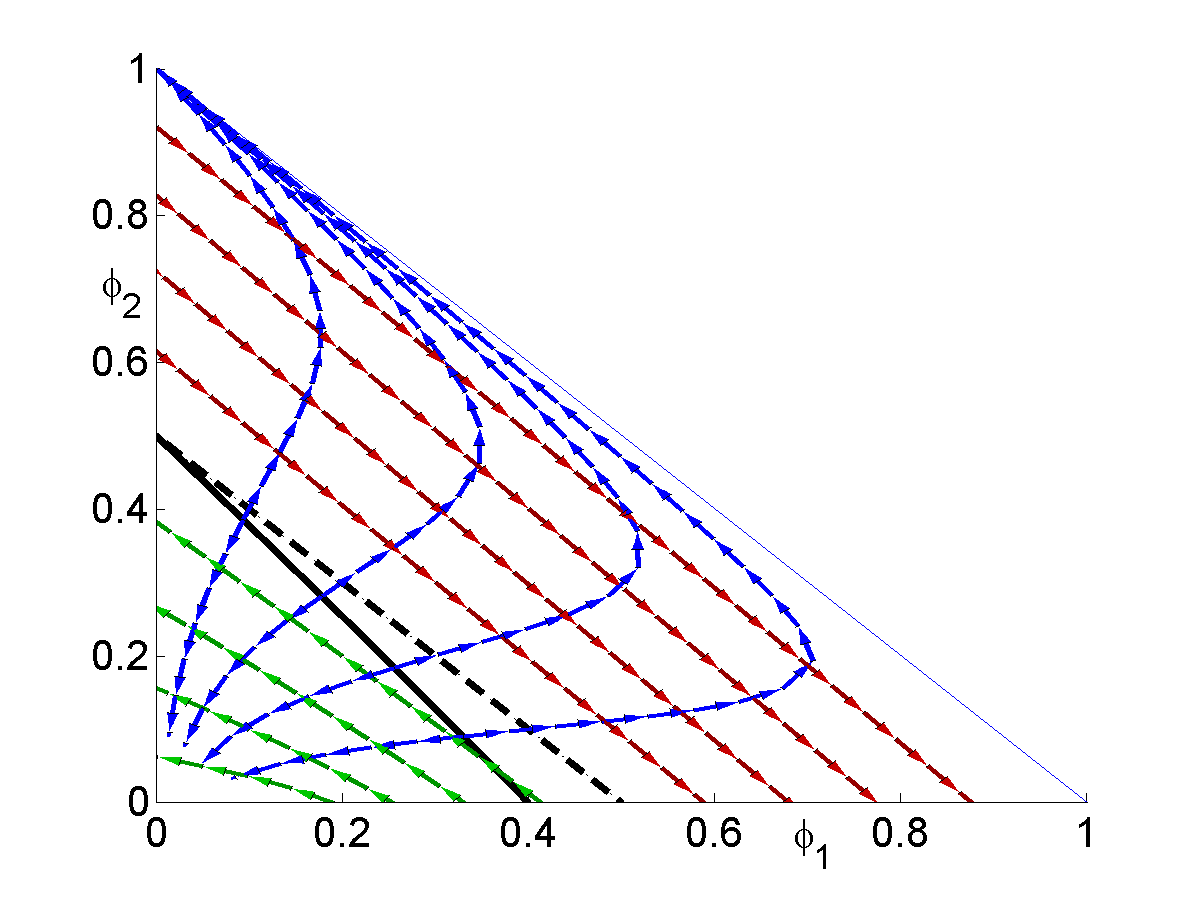}
\includegraphics[width=0.49\textwidth]{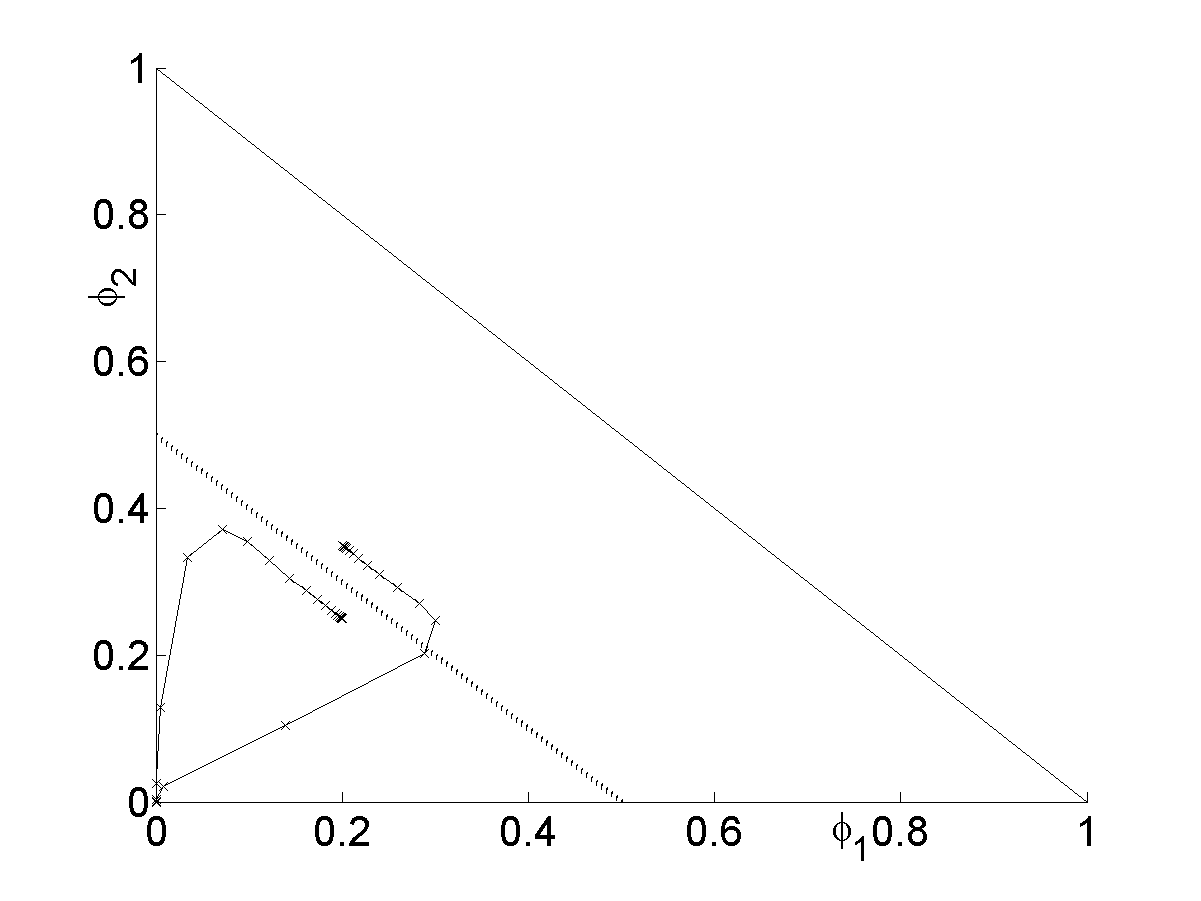}
\end{tabular}
\end{center}
\caption{Phase space of Example 1 with parameters~\eqref{parametersetting1}.
Left: Integral curves in the $\phi_1 \phi_2-$coordinate plane:
First family (crossing all lines with constant $\phi$) 
and second family (connecting the axis). 
The integral curves point into the direction of increasing eigenvalues;
contact manifold (dashed), inflection manifold of the first family (solid).
Right: Solution of Riemann problems $RP(O,(0.2,0.25))$,
$\RP(O,(0.2,0.35))$ by a finite difference method (low resolution). } 
\label{fig:curves2x2}
\end{figure}

%
%


%

\section{Quasi-umbilic point} \label{subs:quasiumbilic}


The best examined cases of the loss of hyperbolicity are related to an ``umbilic point'' \cite{impt88, Matos, ss87},
which is 
an isolated point where strict hyperbolicity fails.
A description of new types of isolated points with loss of hyperbolicity is given in \cite{pantersHYP}. 
In particular, if there is a connected set of points with loss of hyperbolicity, then more refined characterizations are needed. For instance, a generalization of the umbilic point is the coincidence point. 

\begin{definition} \label{def:umbilic}
A point $\Phi$ is called a {\bf coincidence point} 
of the PDE~(\ref{systeq}) with flux function
$\boldsymbol{f} (\Phi)$ 
if the eigenvalues of the
Jacobian matrix $\JJ(\Phi)$  of the flux function 
coincide at this point.
We say that a coincidence point $\Phi^*$ is an {\bf umbilic point}
if it satisfies the following conditions:

(H1) The Jacobian matrix $\JJ(\Phi^*)$ 
is diagonalizable.

(H2) There is a neighborhood $V$ of $\Phic$ such that $\JJ(\Phi)$
has distinct eigenvalues for all $\Phi \in
V\backslash\{\Phi^*\}$. 
\end{definition}
Typically, we would expect that umbilic points are isolated
coincidence points. However, the previous definition of an umbilic point may fail in either of the two conditions. Following \cite{panters}, we
classify an isolated coincidence point where condition (H1) fails as a {\bf quasi-umbilic point}. It seems that the
classification of quasi-umbilic points
appears at the first time
in \cite{panters}, in \cite{pantersHYP} a detailed description is given. 


\begin{lemma}\label{lem:quasiumbilic}
There is a unique 
coincidence point on $\Phi \in \partial^1$. Upon the choice of parameters there may exist up to two coincidence points on $\Phi \in \partial^2$.
\end{lemma}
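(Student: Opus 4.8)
The plan is to locate coincidence points as intersections of the two explicit edge eigenvalues $\lambda_a$ and $\lambda_b$ recorded in \eqref{lam12:phiab}. Because the interior of $\mathcal{D}_{\Phimax}$ is strictly hyperbolic ($\Delta_{\Phi}>0$) and the whole maximum-packing edge $\partial^\infty=\mathcal{C}(\Phimax)$ is a degenerate coincidence set with $\lambda_1=\lambda_2=0$, the genuinely isolated coincidence points on the open axes are exactly the solutions of $\lambda_a(\Phi)=\lambda_b(\Phi)$ for $\phi\in(0,1)$. On $\partial^1$ this equation reads $(1-(1+n_1)\phi)u_1=u_2-\phi u_1$, which collapses to $(1-n_1\phi)\,u_1(\Phi)=u_2(\Phi)$; inserting $u_i=v_{\infty i}(1-\phi)^{n_i-1}$ and cancelling the strictly positive factor $(1-\phi)^{n_2-1}$ gives the scalar condition
\[
	P_1(\phi):=(1-n_1\phi)(1-\phi)^{n_1-n_2}=v_{\infty 2}/v_{\infty 1}.
\]
The identical manipulation on $\partial^2$ produces $(1-n_2\phi)\,u_2(\Phi)=u_1(\Phi)$, i.e.\ $(1-n_2\phi)=\rho\,(1-\phi)^{m}$ with $\rho:=v_{\infty 1}/v_{\infty 2}>1$ and $m:=n_1-n_2>0$.

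For $\partial^1$ I would prove existence and uniqueness by monotonicity. By (S1) we have $P_1(0)=1>v_{\infty 2}/v_{\infty 1}$, while $P_1(1/n_1)=0$ and $P_1(\phi)\le 0<v_{\infty 2}/v_{\infty 1}$ for $\phi\in[1/n_1,1)$, so every root lies in $(0,1/n_1)$. Differentiation yields
\[
	P_1'(\phi)=-(1-\phi)^{n_1-n_2-1}\bigl[(2n_1-n_2)-n_1(1+n_1-n_2)\phi\bigr],
\]
and the bracket is affine in $\phi$ with positive endpoint values $2n_1-n_2>0$ at $\phi=0$ and $n_1-1>0$ at $\phi=1/n_1$ (using (S2)); hence it stays positive and $P_1$ is strictly decreasing on $[0,1/n_1]$. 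A strictly monotone $P_1$ meets the level $v_{\infty 2}/v_{\infty 1}\in(0,1)$ exactly once, giving the unique coincidence point on $\partial^1$.

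For $\partial^2$ I would count the zeros of $Q(\phi):=(1-n_2\phi)-\rho(1-\phi)^{m}$ on $(0,1)$. Here $Q(0)=1-\rho<0$ and $Q(1^-)=1-n_2<0$ share the same sign. The derivative $Q'(\phi)=-n_2+\rho m\,(1-\phi)^{m-1}$ is monotone in $\phi$, since $(1-\phi)^{m-1}$ is monotone for every $m>0$ (decreasing if $m>1$, constant if $m=1$, increasing if $m<1$); therefore $Q'$ changes sign at most once, $Q$ has at most one interior critical point, and, being negative at both ends, $Q$ has either no zero, one double zero (tangency), or two zeros. Which of these occurs is decided by the sign of the interior maximum of $Q$, which varies with $\rho$, $m$ and $n_2$; this is precisely the claimed bound of \emph{up to two} coincidence points, and for the benchmark choice $m=1$ one checks $Q<0$ throughout, so none arises on $\partial^2$.

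The routine parts are the algebraic reductions and the endpoint evaluations. The main obstacle is the global shape control: for $\partial^1$ it is the positivity of the affine bracket in $P_1'$ on the entire interval $[0,1/n_1]$, which (S1)--(S2) make transparent, and for $\partial^2$ it is the observation that $Q'$ is monotone across all regimes of $m$, so that the two-root bound is rigorous rather than merely generic. Establishing this uniform monotonicity of $Q'$ is the only genuinely delicate point.
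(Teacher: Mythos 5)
Your proof is correct, and its skeleton coincides with the paper's: on each open axis you reduce the coincidence condition $\lambda_a(\Phi)=\lambda_b(\Phi)$ from \eqref{lam12:phiab} to the same scalar equations, $(1-n_1\phi)u_1=u_2$ on $\partial^1$ and $(1-n_2\phi)u_2=u_1$ on $\partial^2$, and your $P_1$ is exactly the paper's $R_1$. On $\partial^1$ both arguments conclude by strict monotonicity on $[0,1/n_1]$ together with $R_1(0)=1$, $R_1(1/n_1)=0$ and the level $v_{\infty 2}/v_{\infty 1}\in(0,1)$; the paper simply notes that both terms of $R_1'$ are negative on that interval, while you factor out the affine bracket $(2n_1-n_2)-n_1(1+n_1-n_2)\phi$ and check its positivity at the endpoints --- equivalent computations. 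The genuine difference is on $\partial^2$: the paper analyzes $R_2(\phi)=(1-n_2\phi)(1-\phi)^{n_2-n_1}$ directly, locating its single interior extremum $\phi^m$ in \eqref{phim} and comparing $R_2(\phi^m)$ against $v_{\infty 1}/v_{\infty 2}$, whereas you clear the factor $(1-\phi)^{n_2-n_1}$ and count zeros of $Q(\phi)=(1-n_2\phi)-\rho(1-\phi)^m$ via monotonicity of $Q'(\phi)=-n_2+\rho m(1-\phi)^{m-1}$, using $Q(0)<0$ and $Q(1^-)<0$. The two routes are equivalent where both apply, but yours is slightly more robust: the closed form \eqref{phim} degenerates when $n_1-n_2=1$ (and the paper's displayed expression for $R_2'$ carries a sign slip in its second term, silently corrected in the factored form that yields \eqref{phim}), while your monotone-$Q'$ argument covers all $m=n_1-n_2>0$ uniformly and gives the extra observation that an interior maximum of $Q$ --- hence any coincidence point on $\partial^2$ within $\mathcal{D}_{\Phimax}$ --- requires $m>1$. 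In particular your conclusion that $m=1$ produces no coincidence point on $\partial^2$ inside the physical triangle is correct, and it sits in tension with the paper's post-lemma remark for $n_1-n_2=1$, which presumably concerns the extended flux of Fig.~\ref{fig:QuaseUmbilic} rather than $\mathcal{D}_{\Phimax}$. What the paper's route buys in exchange is the explicit threshold location $\phi^m$, reused immediately afterwards in the conditions separating zero, one (tangency) and two coincidence points; your sign-of-the-interior-maximum criterion encodes the same trichotomy without the closed form.
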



\begin{proof}
By definition, a state $\Phi$
is a coincidence point if and only if $\lambda_a(\Phi) = \lambda_b(\Phi)$ 
holds.
Namely, from the eigenvalue expression \eqref{lam12:phiab}, one has a coincident point on axis $\Phi \in \partial^1$ 
if there exists some $\phi_1 \in [0,\,1]$ such that $(1 - n_1\phi_1)u_1(\Phi) = u_2(\Phi)$ holds. 
Therefore, we must look for such a value $\phi_1$ which is a zero root of $R_1(\phi) - v_{\infty 2}/v_{\infty 1} = 0$, where
\begin{align*}
	R_1(\phi) := (1 - n_1 \phi) (1 - \phi)^{n_1-n_2}.
\end{align*}
Notice that by (S1) and (S2) the ratio $v_{\infty 2}/v_{\infty 1}$ is smaller than one and that the power $n_1 - n_2$ is positive.
%
%
Since $R_1(0) = 1$, $R_1(1/n_1)=0$ and $R_1'(\phi) = -n_1(1 - \phi)^{n_1-n_2} - (n_1 - n_2)(1 - n_1\phi)(1 - \phi)^{n_1-n_2-1}$ is negative for all $\phi \in [0,\,1/n_1]$, we conclude that $R_1(\phi) = v_{\infty 2}/v_{\infty 1}$ occurs at a single point, say $\phiu$, which is the unique zero root of $R_1(\phi) - v_{\infty 2}/v_{\infty 1} = 0$. Let us denote such a state as $Q_1 := (\phiu,\,0)^T$.

Similarly, for the axis $\Phi \in \partial^2$, from the eigenvalue expression \eqref{lam12:phiab}, we look for a value $\phi_2 \in [0,\,1]$ 
(while $\phi_1=0$)
which is a zero root of $R_2(\phi) - v_{\infty 1}/v_{\infty 2} = 0$, where
$$R_2(\phi) := (1 - n_2 \phi) (1 - \phi)^{n_2-n_1},\qquad \phi \neq 1.$$
Notice that $R_2(0) = 1$ and $R_2(1/n_2) = 0$ hold, and since $n_1 > n_2 > 1$, in the limit $\lim_{\phi\rightarrow 1-}R_2(\phi) = -\infty$ holds. Moreover, $R_2(\phi)$ is positive for $\phi \in [0,\, 1/n_2)$ and negative for $\phi \in (1/n_2,\,1)$. As the ratio $v_{\infty 1}/v_{\infty 2}$ is larger than one, there is no coincidence point if $R_2(\phi)$ is always smaller than such a ratio. For $\phi \in [0,\,1)$ we have
\begin{eqnarray*}
R_2'(\phi) &=& -n_2(1 - \phi)^{n_2-n_1} - (n_1 - n_2) (1 - n_2\phi)(1 - \phi)^{n_2-n_1-1} \\
           &=& [- n_2 (1-\phi) + (n_2 - n_1) (1 - n_2\phi)](1 - \phi)^{n_2-n_1-1}. 
\end{eqnarray*}
Thus, a single extremum of $R(\phi)$ occurs at 
\begin{align} \label{phim}
	\phi^m := \fracd{n_1 - 2n_2}{(n_1 - n_2 - 1)n_2}	
\end{align} 
There is one single coincidence point on the axis if $R_2(\phi^m) = v_{\infty 1}/v_{\infty 2}$.
The existence of two coincidence points occurs if $R_2(\phi^m)$ is larger than $v_{\infty 1}/v_{\infty 2}$; say $Q_2 := (0,\,\phiu_2)^{\mathrm{T}}$,  $Q_3 := (0,\,\phiu_3)^{\mathrm{T}}$ where $R_2(\phiu_2) = R_2(\phiu_3) = v_{\infty 1}/v_{\infty 2}$.
\end{proof}

The necessary and sufficient conditions for coincidence points on $\partial^2$ are difficult to evaluate. 
Let us define the ratio $W := v_{\infty 1}/v_{\infty 2}$. On the one hand we know that $V$ exceeds one due to condition (S1). 
On the other hand the powers $n_1$, $n_2$ are also free parameters. 
Some calculations are possible if the difference between $n_1$ and $n_2$ is a natural number.
For example, 
for $n_1 - n_2 = 1$, there are two coincidence points if $n_2 > W$; 
for $n_1 - n_2 = 2$, there are two coincidence points if $n_2 > 2(W + \sqrt{W^2 - W})$; 
for $n_1 - n_2 = 3$, there are two coincidence points if $4n_2^3 - 27 W (n_2 - 1)^2 > 0$.
%
The inequality $R_2(\phi_m) < 1$ holds for powers satisfying $n_2+1 < n_1 <2n_2$, 
and for $\phi_m$ definied in \eqref{phim}. 
In this case there are not coincidences on the axis $\partial^2$.


For $\Phi=(\phi,0)^{\mathrm{T}}$ the Jacobian matrix is an upper triangular matrix
and for $\Phi=(0,\phi)^{\mathrm{T}}$ a lower triangular matrix. In both cases the Jacobian matrix is diagonalizable.
In view of Definition \eqref{def:umbilic}, condition (H2) is satisfied, but (H1) not necessarily.
Given that the diagonalization is valid for all $\Phi \in \partial^1$, the coincidence point
$Q_1 = (\phiu,0)^{\mathrm{T}}$ is a quasi-umbilic point,
where the Jacobian matrix has the form
\begin{align*} 
	 \begin{pmatrix} c_{\sigma} & *\\ 0 & c_{\sigma} \end{pmatrix}
\end{align*}
is a Jordan block with $* \neq 0$ and therefore not diagonalizable.
This means that (H1) is violated, but (H2) is satisfied.
Therefore, from Lemma~\ref{lem:quasiumbilic} we have the following result.


\begin{corollary}
Any isolated coincidence point on the boundary turns out to be quasi-umbilic.
\end{corollary}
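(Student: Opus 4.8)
The plan is to reduce the statement to the two open axes and then verify the two clauses of Definition~\ref{def:umbilic} separately. First I would record that the maximum packing manifold $\partial^{\infty}$ is an entire \emph{continuum} of coincidence points, since there $\lambda_1(\Phi) = \lambda_2(\Phi) = 0$ by Theorem~\ref{theo:eigval}; consequently no coincidence point lying on $\partial^{\infty}$ can be isolated, and every \emph{isolated} coincidence point on the boundary must lie on one of the open axes $\partial^1$ or $\partial^2$ of \eqref{def:boundaries}. By Lemma~\ref{lem:quasiumbilic} these are precisely $Q_1$ on $\partial^1$ and, when they exist, $Q_2,Q_3$ on $\partial^2$, and each such point has a strictly positive nonzero coordinate, i.e. $\phi_1 = \phiu > 0$ on $\partial^1$ (recall $\phiu \in (0,1/n_1)$) and $\phi_2 > 0$ on $\partial^2$.

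For condition (H2) I would argue directly from the hypothesis: an isolated coincidence point possesses, by the very meaning of isolation, a punctured neighbourhood free of coincidences, so the two eigenvalues stay distinct on $V\setminus\{\Phi^*\}$ and (H2) holds automatically. The substantive part is therefore the failure of (H1). Here I would exploit that on either axis the Jacobian \eqref{jacspec} is triangular — upper triangular on $\partial^1$ because $\phi_2=0$ kills $\J_{21}$, lower triangular on $\partial^2$ — so its two eigenvalues are exactly the diagonal entries $\lambda_a,\lambda_b$ of \eqref{lam12:phiab}, which coincide at the coincidence point. A triangular $2\times2$ matrix with a repeated diagonal value is diagonalizable if and only if its single off-diagonal entry vanishes, so (H1) fails precisely when that entry is nonzero at the point.

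The \emph{crux} is thus to show the surviving off-diagonal entry does not vanish. On $\partial^1$ it is $\J_{12}=\phi_1 u_{12}$, and evaluating \eqref{vijcond} with $\phi_2=0$ (so that $\Phi^{\mathrm{T}}\bd' = \phi_1 u_1'$) yields $u_{12} = (1-\phi_1)u_1' - u_2$. Substituting the explicit $u_i$ of \eqref{uidef} produces the clean cancellation $(1-\phi_1)u_1' = -(n_1-1)u_1$, whence $u_{12} = -(n_1-1)u_1 - u_2$, which is strictly negative because $u_1,u_2>0$ in the interior and $n_1>1$ by (S2); the symmetric computation on $\partial^2$ gives $\J_{21}=\phi_2 u_{21}$ with $u_{21} = -(n_2-1)u_2 - u_1 < 0$. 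Since the coincidence points of Lemma~\ref{lem:quasiumbilic} all satisfy $\phi_i>0$, the relevant off-diagonal entry is nonzero there, the Jacobian is a genuine Jordan block, (H1) is violated while (H2) holds, and the point is quasi-umbilic by definition. I expect the only delicate step to be the bookkeeping in reducing $u_{12}$ from \eqref{vijcond}, but the identity $(1-\phi_1)u_1' = -(n_1-1)u_1$ fixes the sign unambiguously, so no real obstacle survives beyond this routine evaluation.
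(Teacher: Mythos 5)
Your proof is correct and follows essentially the same route as the paper: on the axes the Jacobian \eqref{jacspec} is triangular, at a coincidence point its diagonal entries coincide, and non-diagonalizability (failure of (H1)) together with the isolation of the point (giving (H2)) yields quasi-umbilicity. In fact your argument is slightly more complete than the paper's, which merely asserts that the off-diagonal entry $*$ of the Jordan block is nonzero, whereas you verify it explicitly via $u_{12} = (1-\phi_1)u_1' - u_2 = -(n_1-1)u_1 - u_2 < 0$ (and symmetrically $u_{21}<0$ on $\partial^2$), and you also make explicit the preliminary observation that no point of $\partial^{\infty}$ can be an isolated coincidence point.
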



%

\begin{figure}[htb]
\centering
\includegraphics[width = \textwidth]{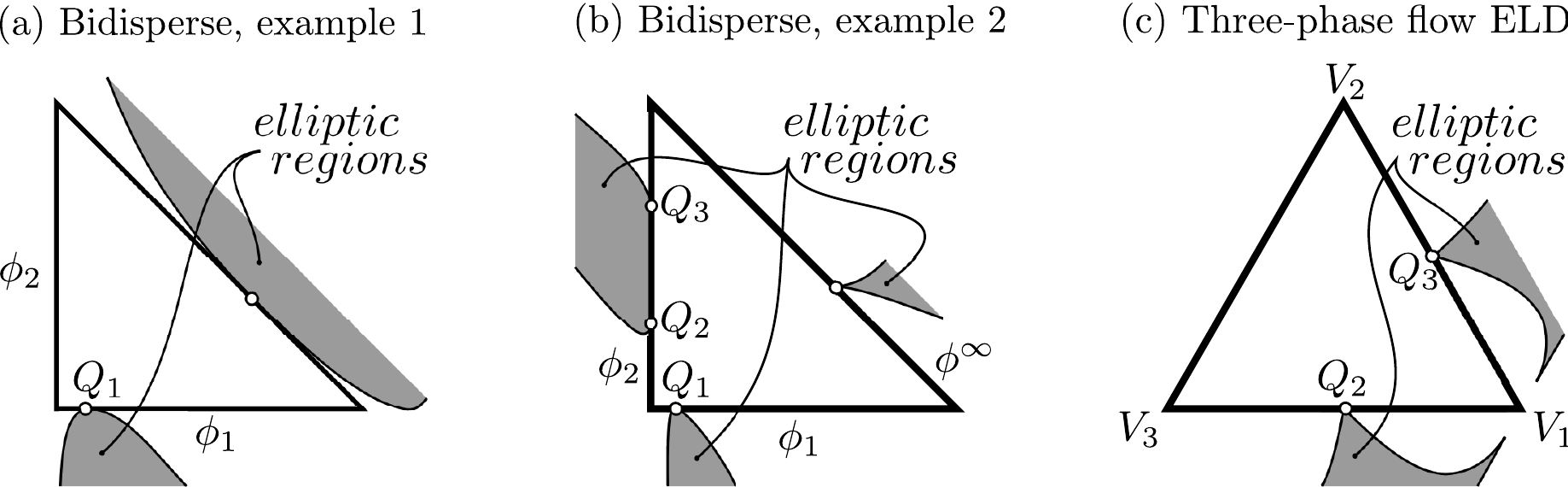}
\caption{Extended domains. 
Concentration and saturation triangles are marked by a bold solid line, 
shaded regions contain states with complex eigenvalues of the flux Jacobian. In (a) the Example 1 reveals two quasi-umbilic points at the boundary of elliptic regions; the point $Q_1$ was detected by our analysis. In (b) the Example 2 shows the three quasi-umbilic points detected by the previous analysis, another one appears at the boundary of the elliptic regions. The plot in (c) represents the ELD where the two quasi-umbilic points also belong to the boundary of elliptic regions.}
\label{fig:QuaseUmbilic}
\end{figure}

For illustration of the behavior of the coincidence points
the flux functions \eqref{fluxfunction} are continuously extended beyond the phase space
by considering relative velocities \eqref{uidef} without the cut-off \eqref{videf}.
The 
visualization
in Fig.~\ref{fig:QuaseUmbilic} shows that the identified quasi-umbilic point happens to belong 
to an elliptic/hyperbolic boundary; thus, it is clear that $\lambda_1(Q_1) = \lambda_2(Q_1)$ holds.
Such a concept is introduced in \cite{panters}, and here, their flux functions 
are also extended by letting the dominated quadratic terms to act outside the physical domain. In this case the quasi-umbilic points also belong to an elliptic/hyperbolic boundary, see Fig.~\ref{fig:QuaseUmbilic}~(c).

For the bidisperse model with parameter specifications (S1)--(S3) two general examples are considered. 
Besides {\bf Example 1} with parameter setting \eqref{parametersetting1}, 
{\bf Example 2} has the parameters
\begin{align} \label{parametersetting2}
	v_{1\infty} = 1, v_{2\infty} = 1/2, \qquad n_1 = 4.6, n_2 = 1.5 . 
\end{align}
For the {\bf ELD} model (Equal-Lighter-Density fluids) case in Rodr\'{\i}guez-Berm\'udez and Marchesin (2013) the parameters are chosen as $\rho_1 = 2.0 > 1.0 = \rho_2 = \rho_3$.


\section{Shock classification of Hugoniot locus of origin} \label{sec:shock}

In this Section the types of shocks which are connected to the origin are classified.
The shock classification of Definition~\ref{def:shocks} 
distinguishes three different types of admissible shocks, namely 1-Lax, 2-Lax and overcompressive shocks.
The locations of the shocks on the Hugoniot $\mathcal{H}(O)$
of the origin $O = (0,\,0)^T$ are identified in Lemma~\ref{hlorigin2};
it includes in particular two contact manifolds.
The shock classification builds on the eigenvalue analysis of Section \ref{sec:eig}
by comparing the shock speed and the eigenvalues in each state on the Hugoniot locus.

%
%

A key feature for the determination of the shock type 
consists of the  order switch of the relative velocities
at the threshold concentration $\phis$, 
\begin{align} \label{u12ineq}
	\begin{cases}
		u_1(\Phi) > u_2(\Phi) \quad \text{for} \quad \phi \in [0, \phis), \\
		u_1(\Phi) = u_2(\Phi) \quad \text{for} \quad \phi \in \{\phis, \phimax\} , \\
		u_1(\Phi) < u_2(\Phi) \quad \text{for} \quad \phi \in (\phis, \phimax) ,
	\end{cases}
\end{align}
which is a direct consequence of the definition of the relative velocity \eqref{relvelcon}, \eqref{videf},
together with the specifications (S1) and (S2).

A visual guide for the shock classification 
of shocks between the origin and states on the edges $\partial^1$ and $\partial^2$
is given by Fig.~\ref{fig:edgeanalysis},
where the shock speeds are compared to the characteristic speeds. 
The shock classification is essentially obtained by speed comparisons.
%
%
%
%
In the following Theorem, the shock characterization
is established for general parameter choices.
%
%
%


\begin{figure}[ht]
\begin{center}
\begin{tabular}{cc}
\includegraphics[width=0.49\textwidth]{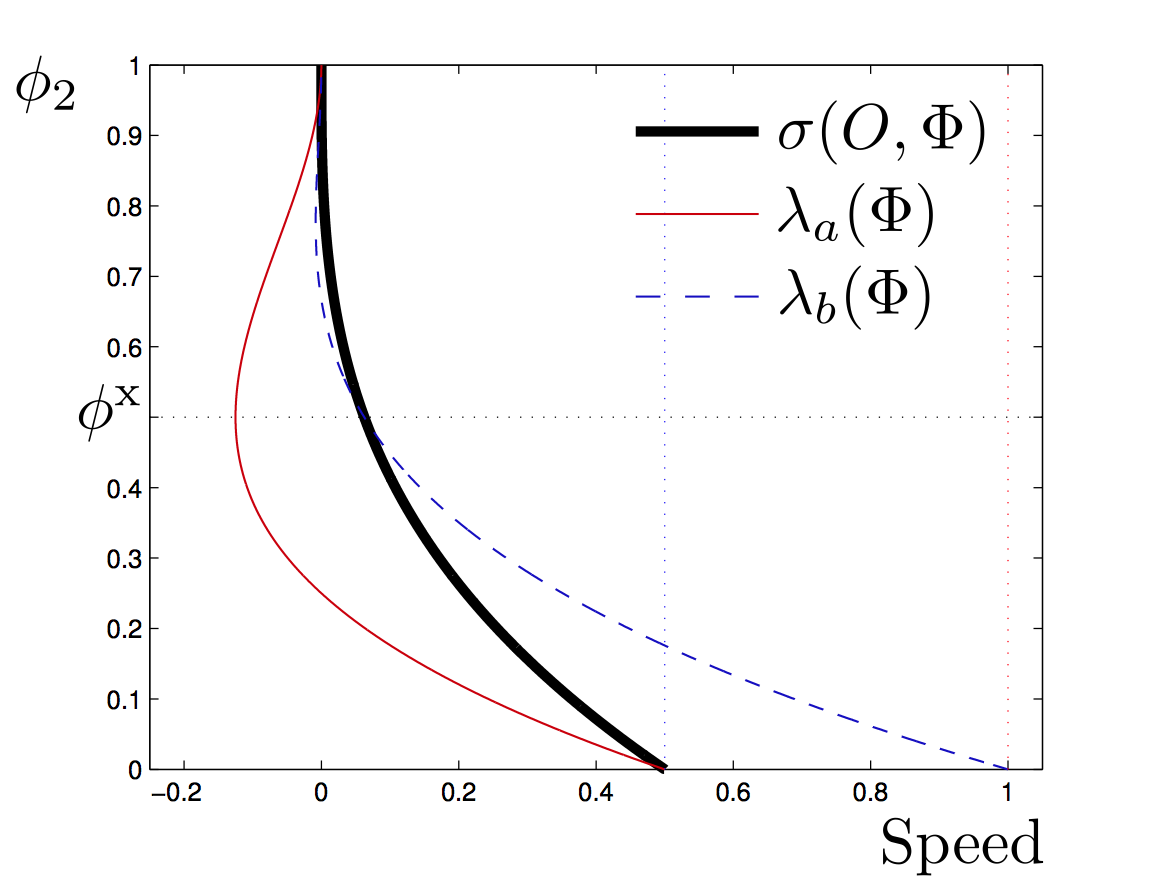} &
\includegraphics[width=0.49\textwidth]{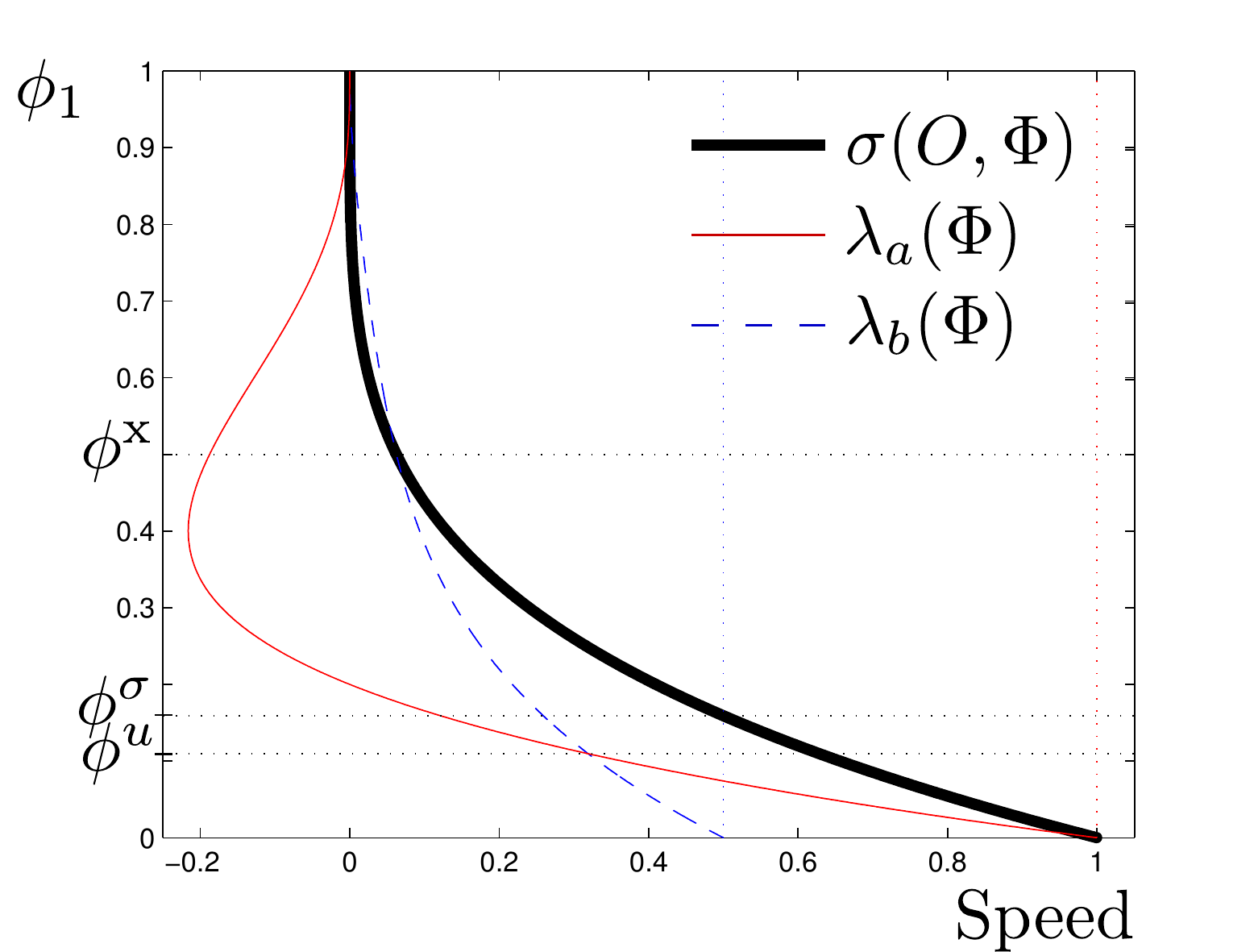} \\
(a) & (b)
\end{tabular}
\end{center}
\caption{Eigenvalues and shock speeds of shocks from the origin to (a) the states $\Phi= (0,\,\phi) \in \partial^2,\, \phi\in[0,\,1]$, 
(b) the states $\Phi= (\phi,\,0) \in \partial^1,\,
\phi\in[0,\,1]$. Bold solid curves represent the shock speed $\sigma(O,\,\Phi)$.
The eigenvalues (characteristic speeds) given in \eqref{lam12:phiab} are represented 
as light solid curves for $\lambda_a(\Phi)$ and dashed curves for $\lambda_b(\Phi)$. 
The dotted horizontal lines represent heights corresponding to $\phis$, $\phisigma$, 
and $\phiu$, ordered from top to bottom. 
} 
\label{fig:edgeanalysis}
\end{figure}

\begin{theorem} \label{teo:classification}
The Riemann problem 
\begin{equation*}
\RP(O, \Phi), \; \Phi \in \mathcal{H}(O) = \partial^1 \cup \partial^2 \cup \mathcal{C}(\Phis) \cup \mathcal{C}(\Phimax) 
\end{equation*}
connecting the origin to a state of its Hugoniot locus 
is solved by a single shock of speed $\sigma = \sigma(O,\Phi)$,
which can be classified as follows according to Definition \ref{def:shocks}.


\noindent \underline{Classification for $\Phi = (\phi_1, 0)^{\mathrm{T}} \in \partial^1$:}
\begin{equation} \label{classification1}
\begin{array}{rcl}
%
\mbox{2-Lax: } & \lambda_{1,\,2}(\Phi) \,\leq\, \sigma \, <\, \lambda_2(O)\,\mbox{ and }\,\lambda_1(O) \,<\, \sigma & \,\mbox{ for }\,\phi_1 \in (0,\,\phisigma), \\
& \lambda_{1,\,2}(\Phi) \,\leq\, \sigma \, < \, \lambda_2(O)\,\mbox{ and }\,\lambda_1(O) \, = \, \sigma & \,\mbox{ for }\,\phi_1 = \phisigma. \\
\mbox{OC: }    & \lambda_{1,\,2}(\Phi) \,<\, \sigma \,<\, \lambda_{1,\,2}(O) & \,\mbox{ for }\,\phi_1 \in (\phisigma,\,\phis). \\
\mbox{1-Lax: } & \lambda_1(\Phi) \, < \, \sigma \,<\, \lambda_{1,\,2}(O)\,\mbox{ and }\,\sigma \, = \, \lambda_2(\Phi) & \,\mbox{ for }\,\phi_1 \in \{ \phis, \phimax \}, \\
& \lambda_1(\Phi) \, < \, \sigma \,<\, \lambda_{1,\,2}(O)\,\mbox{ and }\,\sigma \, < \, \lambda_2(\Phi) & \,\mbox{ for }\,\phi_1 \in (\phis,\,\phimax). \\
%
\end{array}
\end{equation}
The threshold value $\phisigma$ is given as
\begin{align} \label{phisigma}
	\phisigma = 1 - \,\,^{n_1}\!\!\!\!\sqrt{v_{\infty 2}/v_{\infty 1}}.
\end{align}

\noindent \underline{Classification for $\Phi = (0, \phi_2 )^{\mathrm{T}} \in \partial^2$:}
\begin{equation} \label{classification2}
\begin{array}{rcl}
\mbox{1-Lax: } & \lambda_1(\Phi) \,<\, \sigma \, < \, \lambda_{1,\,2}(O)\,\mbox{ and }\,\sigma \, < \, \lambda_2(\Phi) & \,\mbox{ for }\,\phi_2 \in (0, \phis),  \\
		& \lambda_1(\Phi) \, < \, \sigma \,< \, \lambda_{1,\,2}(O)\,\mbox{ and }\,\sigma \,= \, \lambda_2(\Phi) & \,\mbox{ for }\,\phi_2 \in \{ \phis, \phimax \}. \\
\mbox{OC: }    & \lambda_{1,\,2}(\Phi) \,<\, \sigma \,<\, \lambda_{1,\,2}(O) & \,\mbox{ for }\,\phi_2\in(\phis,\,\phimax).  \\
\end{array}
\end{equation}

\noindent \underline{Classification for $\Phi \in \mathcal{C}(\Phis)$:}
\begin{equation} \label{classification3}
\begin{array}{rcl}
	\mbox{1-Lax: } & \lambda_1(\Phi) \,<\, \sigma \, < \, \lambda_{1,\,2}(O)\,\mbox{ and }\,\sigma \,=\, \lambda_2(\Phi). 
\end{array}
\end{equation}

\noindent \underline{Classification for $\Phi \in \mathcal{C}(\Phimax)$:}
\begin{equation} \label{classification4}
\begin{array}{rcl}
	\mbox{1-Lax: } & \lambda_1(\Phi) \,=\, \sigma \, < \, \lambda_{1,\,2}(O)\,\mbox{ and }\,\sigma \,=\, \lambda_2(\Phi). 
\end{array}
\end{equation}
\end{theorem}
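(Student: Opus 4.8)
The plan is to prove the classification by direct speed comparison: for each branch of $\mathcal{H}(O)$ established in Lemma~\ref{hlorigin2}, I would take the explicit shock speed $\sigma(O,\Phi)$ together with the explicit eigenvalue expressions derived in Section~\ref{sec:eig}, and verify the defining inequalities of Definition~\ref{def:shocks} by tracking their signs as $\phi$ increases from $0$ to $\phimax$. The essential tool is the velocity-ordering relation \eqref{u12ineq}, which pins down the sign of $u_1(\Phi)-u_2(\Phi)$ on each side of the threshold $\phis$ and drives every order switch in the argument. Throughout I would keep in mind that at the origin $\lambda_1(O)=v_{\infty 2}<v_{\infty 1}=\lambda_2(O)$, so the eigenvalue labels are unambiguous there and serve as the fixed reference for the Lax inequalities $\lambda_1(O)<\sigma$ versus $\sigma<\lambda_2(O)$.

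First I would treat the axis $\partial^1$, where $\sigma(O,\Phi)=v_1(\Phi)=(1-\phi)u_1(\Phi)$ by \eqref{speedL} and \eqref{vedge}, and the two eigenvalues are $\lambda_a(\Phi)=(1-(1+n_1)\phi)u_1(\Phi)$ and $\lambda_b(\Phi)=u_2(\Phi)-\phi u_1(\Phi)$ from \eqref{lam12:phiab}. The comparisons reduce to scalar inequalities in $\phi$: I would solve $\sigma=\lambda_1(O)=v_{\infty 2}$ to locate the threshold $\phisigma$ in \eqref{phisigma}, check $\sigma<\lambda_2(O)=v_{\infty 1}$ for all $\phi\in(0,\phimax)$ since $v_1$ is decreasing, and use \eqref{u12ineq} to decide on which side $\lambda_b$ lies relative to $\sigma$ — this is precisely what flips the shock from 2-Lax (for $\phi_1<\phisigma$, where $\lambda_1(O)<\sigma$) through over-compressive (for $\phisigma<\phi_1<\phis$, where $\sigma$ falls below both eigenvalues at $O$) to 1-Lax (for $\phi_1>\phis$, where the right state has crossed the contact manifold and $\sigma=\lambda_b(\Phi)=\lambda_2(\Phi)$ becomes a right-characteristic shock). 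The axis $\partial^2$ is analogous but simpler, since the ratio $v_{\infty 1}/v_{\infty 2}>1$ prevents the 2-Lax regime, leaving only 1-Lax for $\phi_2<\phis$ and over-compressive for $\phi_2>\phis$.

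For the transversal branches $\mathcal{C}(\Phis)$ and $\mathcal{C}(\Phimax)$ the argument is essentially already done: Theorem~\ref{theo:eigval} gives $\lambda_2(\Phi)\equiv v_1(\Phic)=\sigma$ and $\lambda_1(\Phic)=v_1(\Phic)-\sqrt{\Delta_{\Phic}}<\sigma$, so the right-characteristic condition $\sigma=\lambda_2(\Phi)$ holds identically and $\lambda_1(\Phi)<\sigma<\lambda_2(O)$ is immediate, yielding the 1-Lax classification \eqref{classification3}. On $\mathcal{C}(\Phimax)=\partial^\infty$ both eigenvalues vanish, so $\lambda_1(\Phi)=\sigma=\lambda_2(\Phi)=0$, giving the doubly-characteristic case \eqref{classification4}. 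The main obstacle will be the axis $\partial^1$: I expect the careful bookkeeping of which of $\lambda_a,\lambda_b$ plays the role of $\lambda_1$ versus $\lambda_2$ across the coincidence points (whose existence is governed by Lemma~\ref{lem:quasiumbilic}) to be delicate, and I would lean on the remark that order changes of $\lambda_a,\lambda_b$ do not affect the classification, so that the Lax inequalities can be stated with the unordered pair $\lambda_{1,2}(\Phi)$ wherever the ordering is ambiguous.
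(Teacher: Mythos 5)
Your proposal follows essentially the same route as the paper's proof: a branch-by-branch comparison of $\sigma(O,\Phi)$ with $\lambda_a(\Phi)$, $\lambda_b(\Phi)$ and $\lambda_{1,2}(O)$ on the axes, with the velocity-order switch \eqref{u12ineq} at $\phis$ driving the 1-Lax/OC transitions, the threshold $\phisigma$ obtained by solving $v_{\infty 1}(1-\phi)^{n_1}=v_{\infty 2}=\lambda_1(O)$, Theorem~\ref{theo:eigval} yielding the right-characteristic 1-Lax classification on $\mathcal{C}(\Phis)$, and vanishing eigenvalues giving the doubly-characteristic case on $\mathcal{C}(\Phimax)$. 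Your handling of the eigenvalue-labeling ambiguity near coincidence points via the unordered pair $\lambda_{1,2}(\Phi)$ matches the paper's use of $\lambda_{a,b}$, and you even state the origin eigenvalue ordering $\lambda_1(O)=v_{\infty 2}<v_{\infty 1}=\lambda_2(O)$ correctly, where the paper's displayed inequality \eqref{sigmainequality} contains a labeling slip.
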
	

Strict inequalities in the shock type classification \eqref{classification1}-\eqref{classification4} mean that 
the corresponding shocks are not characteristic shocks. For instance, for $\Phi \in \partial^1$ there are no 1-Lax shocks characteristic at the left datum $O$.
%
%
%
%

\begin{proof}
The proof is done in two main steps: \\
$\quad$ 1.- The first step is to relate the characteristic speeds $\lambda_{1,2}(O)$ and $\lambda_{a,b}(\Phi)$ to the shock speed $\sigma(O,Ê\Phi)$ in dependence of the position of $\Phi$. \\
$\quad$ 2.- The second step consists in determining which shock  classification applies according to Definition \ref{def:shocks}. \\

Table \ref{tab:shockclassification} gives an overview on the speed magnitudes for right states on the axes $\partial^1$ and $\partial^2$. \\[2mm]

\begin{table}[ht]
\centering
\begin{tabular}{rl | lr | lr | c} 
  & & \multicolumn{2}{|c|}{$<\sigma(O, \Phi)$} & \multicolumn{2}{c|}{$>\sigma(O, \Phi)$} & \\ \hline \hline
$\partial^1:$ & $\phi < \phi^\sigma$ & $\lambda_{a,b}(\Phi)$ & $\lambda_1(O)$ & & $\lambda_2(O)$ & 2-Lax \\
		    & $\phi \in  (\phi^\sigma, \phis$) & $\lambda_{a,b}(\Phi)$ & & & $\lambda_{1,2}(O)$ & OC \\
		    & $\phi > \phis$ & $\lambda_a(\Phi)$ & & $\lambda_b(\Phi)$ & $\lambda_{1,2}(O)$ & 1-Lax \\ \hline
$\partial^2:$ & $\phi < \phis$ & $\lambda_a(\Phi)$ & & $\lambda_b(\Phi)$ & $\lambda_{1,2}(O)$ & 1-Lax \\
		   & $\phi > \phis$ & $\lambda_{a,b}(\Phi)$ & &  & $\lambda_{1,2}(O)$ & OC 
\end{tabular}
\vspace{1cm}
\caption{Shock classification on Hugoniot locus of origin for states  on the axes $\partial^1$ and $\partial^2$.
First column: Location on the axis. Second column: Characteristic speeds less than $\sigma(O, \Phi)$.
Third column: Characteristic speeds larger than $\sigma(O, \Phi)$.
Forth column: Shock type.
}
\label{tab:shockclassification}
\end{table}

		 
\noindent 
\underline{$\lambda_2(O)$ compared to $\sigma(O,\,\Phi)$ on $\partial^1$ and $\partial^2$}

On the axes $\Phi \in \partial^i \backslash O$ the shock speed is limited as
\begin{align} \label{sl2}
		\sigma(O,\Phi) = v_i(\Phi) < v_{\infty i} 
		=
	\begin{cases}
		 v_{\infty 1} = \lambda_2(O), & \quad \text{if} \quad i=1, \\
		 v_{\infty 2}  = \lambda_1(O) < \lambda_2(O), & \quad \text{if} \quad i=2,
	\end{cases}
\end{align}
since the shock speed
\begin{align*}
	\sigma(O,\,\Phi) = v_i(\Phi)=(1-\phi_i)u_i(\Phi) = v_{\infty i} (1-\phi_i)^{n_i}, \quad i=1,\,2,
\end{align*}
is monotonically decreasing on both axes 
$\partial^1$ and $\partial^2$. 
(Note that \eqref{sl2} excludes 2-Lax shocks on edge $\partial^2$.)
\\[2mm]


\noindent
\underline{$\lambda_a(\Phi)$ compared to $\sigma(O,\,\Phi)$ on $\partial^1$ and $\partial^2$} %

The eigenvalue $\lambda_a(\Phi)$ 
as specified in \eqref{lam12:phiab} 
leads to common properties for both axes.
%
For all points $\Phi \in \partial^i \backslash O, \; i \in \{ 1,2 \}$, along the axes one has 
\begin{align}
	\begin{split}
	\label{l1s}
	\lambda_a(\Phi) 
	& = [ 1-(1+n_i)\phi_i ] u_i(\Phi) \\
	& < (1-\phi_i) u_i(\Phi) = v_i(\Phi) = \sigma(O,\Phi), \quad i=1,2 ,
	\end{split}
\end{align}
due to  properties that apply on the axes, namely, the speed convertibility \eqref{vedge} and the shock speed \eqref{speedL}.  \\[2mm]
%

%
%
%



\noindent
\underline{$\lambda_b(\Phi)$ compared to $\sigma(O,\,\Phi)$ on $\partial^1$ and $\partial^2$}

The composition of the eigenvalue $\lambda_b(\Phi)$ depends on 
the inequality \eqref{u12ineq}
between the relative velocities $u_1(\Phi)$ and $u_2(\Phi)$,
and induces quite different behaviors on the edges. 
On the axis $\partial^1$ 
the eigenvalue $\lambda_b(\Phi) $ behaves as
\begin{align}
\begin{cases} \label{laswitch}
		\lambda_b(\Phi)  < \sigma(O,\,\Phi) \quad \text{for} \quad \phi_1 \in (0,\,\phis), \\
		\lambda_b(\Phi) = \sigma(O,\,\Phi) \quad \text{for} \quad \phi_1 \in \{ \phis, \phimax \}, \\		
		\lambda_b(\Phi) > \sigma(O,\,\Phi) \quad \text{for} \quad \phi_1 \in  (\phis,\,\phimax) ,
	\end{cases}
\end{align}
because on $\partial^1$, the inequalities \eqref{u12ineq}
between $u_1(\Phi)$ and $u_2(\Phi)$
implies that for the interval $\phi_1 \in [0,\,\phis)$
the eigenvalue (\ref{lam12:phiab}) satisfies
\begin{align*}
	\lambda_b(\Phi) = u_2(\Phi) - \phi_1 u_1(\Phi) < (1 - \phi_1) u_1(\Phi) =  \sigma(O,\,\Phi) ,
\end{align*}
and for the other interval, where $\phi_1 \in  (\phis,\,\phimax]$, the inequality sign switches.

On the axis $\partial^2$
one has
\begin{align} \label{l2switch}
	\begin{cases} 
		\lambda_b(\Phi) > \sigma(O,\Phi) \quad \text{for} \quad \phi_2 \in [0, \phis), \\
		\lambda_b(\Phi) = \sigma(O,\Phi) \quad \text{for} \quad \phi_2 \in \{�\phis, \phimax \}, \\
		\lambda_b(\Phi) < \sigma(O,\Phi) \quad \text{for} \quad \phi_2 \in (\phis, \phimax ) ,
	\end{cases}
\end{align}
since, from the inequalities \eqref{u12ineq} for $\phi_2 \in [0, \phis)$,
one obtains
\begin{align*}
	\lambda_b(\Phi) = u_1(\Phi) - \phi_2 u_2(\Phi) > (1-\phi_2)u_2(\Phi) =  \sigma(O,\,\Phi)
\end{align*}
and for $\phi \in (\phis,\phimax]$ the inequality sign switches. 

On the edge $\partial^2$ the value of $\lambda_b(\Phi)$ 
determines whether the shock is 1-Lax or over-compressive. The shock type is decided by the relative magnitudes
of $u_1(\Phi)$ contra $u_2(\Phi)$: The equality $u_1(\Phi) = u_2(\Phi)$ only holds for $\phi_2 = \phis$ (and $\phi_2 = \phimax$).
On edge $\partial^1$ there is a coincidence of eigenvalues, see also Lemma~\ref{lem:quasiumbilic}. \\[2mm]




\noindent
\underline{$\lambda_1(O)$ compared to $\sigma(O,\,\Phi)$ on $\partial^1$ and $\partial^2$}


Note that, along both axes $\sigma(O,\Phi)$ is a monotonically decreasing function with $\sigma(O,\Phi^{\infty})=0$. 
By \eqref{sl2} is assured that $\sigma(O,\Phi) < \lambda_1(O)$ holds for all states on the axis $\partial^2 \backslash O$. 
However, this does not hold for all states on the axis $\partial^1 \backslash O$.

Since 
$\lim_{\varepsilonÊ\rightarrow 0} \sigma(O, (\varepsilon, 0)) = \lambda_2(O) > \lambda_1(O) > 0$ on the axis 
$\partial^1$, there exists a state $\Phisigma=(\phisigma,0)^{\mathrm{T}}$ such that
$\sigma(O,\Phisigma) = \lambda_1(O)$.
In this state $\Phisigma$ 
the equality 
\begin{align*}
	\sigma(O,\Phisigma) = v_1(\Phisigma) = 
	v_{\infty 1} ( 1 - \phi_{\sigma})^{n_1}  = v_{\infty 2}  = \lambda_1(O) 
\end{align*}
holds, from which the characterization of $\phisigma$ by \eqref{phisigma} can be deduced.
The shock speed $\sigma(O,\Phi)$ relates to $\lambda_1(O)$ 
in dependence of $\phisigma$ as
\begin{align} \label{ocineq}
\begin{cases}
		\sigma(O,\,\Phi) > \lambda_1(O) \quad \text{for} \quad \phi_1 \in [0,\,\phisigma), \\
		\sigma(O,\,\Phi) = \lambda_1(O) \quad \text{for} \quad \phi_1 = \phisigma, \\ 
		\sigma(O,\,\Phi) < \lambda_1(O) \quad \text{for} \quad \phi_1 \in (\phisigma,\,\phimax] .
	\end{cases}
\end{align}

Together with the previously established \eqref{sl2}, \eqref{l1s} and \eqref{laswitch} 
the discrimination \eqref{ocineq} implies that,
on the axis $\partial^1$,
for $\phi < \Phisigma$ 
there is a 2-Lax shock,
whereas 
for $\Phisigma < \phi < \phis$
the shock is over-compressive.

By \eqref{l1s}, \eqref{laswitch}, for $\phi \in (\phis, \phimax)$, on axis $\partial^1$ there is a clear separation between the shock speeds:
\begin{align} \label{shockorder}
	\lambda_1(\Phi) = \lambda_a(\Phi) < \sigma(O,\Phi) < \lambda_b(\Phi) = \lambda_2(\Phi) . 
\end{align}
This separation allows an association of the eigenvalues according to \eqref{lambdaorder}.
The same way,
by \eqref{l1s}, \eqref{l2switch} for $\phi \in (0, \phis)$ on $\partial^2$ there is a clear separation between the shock speeds
\eqref{shockorder} that establishes the eigenvalues order \eqref{lambdaorder}.
%
For $\phi \in (\phis, \phimax)$ there is no clear eigenvalue separation, which however does not affect the 
fact that the shock is over-compressive.

Now we are able to conclude the shock classification on the axes:
Putting  the inequalities
\eqref{sl2},
\eqref{l1s}, 
\eqref{l2switch}
together gives the shock classification
 \eqref{classification2} for right states on the axis $\partial^2$.
Putting  the inequalities
\eqref{sl2}, 
\eqref{l1s},
\eqref{laswitch}, \eqref{ocineq}
together gives the shock classification \eqref{classification1}
for right states on the axis $\partial^1$. \\[2mm]
%



\noindent
\underline{Contact manifold $\mathcal{C}(\Phis)$} 

The shock classification for states on the contact manifold $\Phi \in \mathcal{C}(\Phis)$ assures that all states are connected to the origin by a 1-Lax shock:
Since the functions $v_1(\Phi)$ and $v_2(\Phi)$ take their maximum in the origin $O$ and the origin is not part of the contact manifold, one has 
the strict inequality
\begin{align} \label{sigmainequality}
	\sigma(O,\Phi) = v_1(\Phis) < v_{\infty 1} = \lambda_1(O) < v_{\infty 2} = \lambda_2(O) .
\end{align}
By the
eigenvalue characterization \eqref{eq:lambda2onC} for states on $\mathcal{C}(\Phis)$, 
and noting that 
$\Delta_{\Phi}>0$ for all  $\Phi \in \mathcal{C}(\Phis)$,
 one gets
\begin{align} \label{sigmaphis} 
 \lambda_1(\Phi) < \sigma(O,\Phi) = v_{1}(\Phi) = \lambda_2(\Phi) 
\end{align}
 for all  $\Phi \in \mathcal{C}(\Phis)$.
The properties \eqref{sigmainequality} and \eqref{sigmaphis} 
are summarized in the shock classification \eqref{classification3}. \\[2mm]


\noindent
\underline{Contact manifold $\partial^{\infty} = \mathcal{C}(\Phi^{\infty})$}

For states on the maximum packing manifold $\Phi \in \partial^{\infty}$
one has vanishing eigenvalues,
\begin{align*}
	0 = \lambda_1(\Phi) = \lambda_2(\Phi) =  \sigma(O,\Phi) < \lambda_1(O) 
	< \lambda_2(O) ,
\end{align*}
which leads to classification \eqref{classification4}.
\end{proof}


\begin{figure}[htb]
\centering
\includegraphics[width = 0.4 \textwidth]{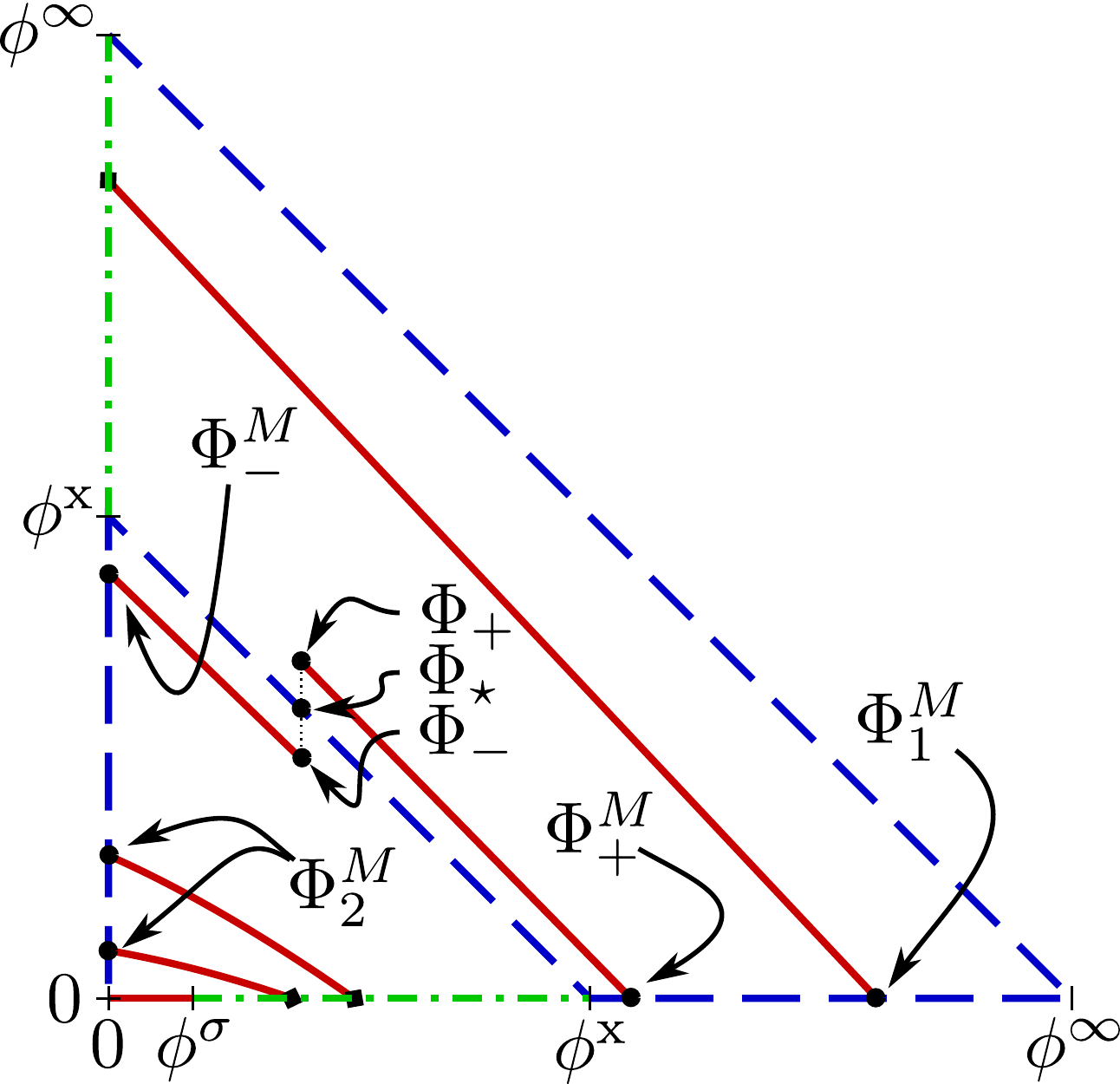}
\caption{Different shock types for states connected to the origin. Continuous curves are 2-Lax shocks, dashed curves are 1-Lax shocks and dotted-dashed are over-compressive shocks. The Hugoniot locus of the origin comprises the three edges and the contact manifolds $\mathcal{C}(\Phis)$, $\mathcal{C}(\Phimax)$.
Also some 2-Lax shock curves from arbitrarily chosen generic points $\Phi_1^M$, $\Phi_2^M$, $\Phi_-^M$ and $\Phi_+^M$ are indicated. 
Reference values on the horizontal axis $\partial^1$ are $\phisigma$, $\phis$ and $\phimax$, on the vertical axis $\partial^2$ are $\phis$ and $\phimax$.}
\label{fig:shClassification}
\end{figure}

Figure~\ref{fig:shClassification} displays the different shock types for states connected with the origin, 
as elaborated in Theorem \ref{teo:classification}.
The 1-Lax states of $\mathcal{H}(O)$ are shown  as dashed lines.

%
States on the contact manifold $\mathcal{C}(\Phis)$ satisfy the 1-Lax conditions being characteristic in the second family at the right state. 
In the same manner, the states on the maximum packing $\mathcal{C}(\Phimax)=\partial^{\infty}$  
satisfy to be 1-Lax shocks which are characteristic at the right state with respect to both families. 
On the edge $\partial^2$, at $\Phis = (0,\,\phis)^{\mathrm{T}}$ and 
$\Phimax = (0,\,\phimax)^{\mathrm{T}}$ 
the shocks are 1-Lax and characteristic at the right states, 
say $\sigma(O,\,\Phis) = \lambda_2(\Phis)$ and $\sigma(O,\,\Phimax) = \lambda_{2}(\Phimax)$.
On the edge $\partial^1$, 
at $\Phis = (\phis,\,0)^{\mathrm{T}}$, $\Phisigma = (\phisigma,\,0)^{\mathrm{T}}$ and $\Phimax = (\phimax,\,0)^{\mathrm{T}}$ the shocks are 2-Lax, over-compressive and 1-Lax, respectively, being characteristic at left or right states, say $\sigma(O,\,\Phis) = \lambda_1(O)$, $\sigma(O,\,\Phisigma) = \lambda_2(\Phisigma)$ and $\sigma(O,\,\Phimax) = \lambda_{2}(\Phimax)$.
%
Several states located in Fig.~\ref{fig:edgeanalysis}~(b) have characteristic shocks:
If $\Phi \in \mathcal{C}(\Phis)$ then the shock is right characteristic for the second family, 
if $\phi = \phimax$ then it is characteristic for both families, 
and if $\Phi = (0,\,\phisigma)^{\mathrm{T}}$ then the shock is 2-Lax but left characteristic in the first family.


%



%
%


%

\section{Riemann problems} \label{sec:rp}

In this Section, we construct the solution of the Riemann problems $\RP(O,\,\Phi)$ and $\RP(\Phi, \, \Phimax)$,
where $O = (0,\,0)^{\mathrm{T}}$, $\Phimax \in \partial^{\infty}$, and $\Phi \in \mathcal{D}_{\Phimax}$ 
is a generic right or left state in the phase space. These Riemann problems are derived from the standard initial condition \eqref{inithom}. 

%
%

\subsection{The Riemann problem $\RP(O,\,\Phi)$}

%
The behavior of solutions to the Riemann problem $\RP(O,\Phi)$
depends on the position of $\Phi$ in the interior of $\mathcal{D}_{\Phimax}$
with respect to the contact manifold $\mathcal{C}(\Phis)$,
which splits 
the domain into two regions:
\begin{eqnarray*}
\label{dom:D-}
\mathcal{D}_{\mathrm{x}}^- &:=& \{ \Phi \in \mathcal{D}_{\Phimax} \,:\, \phi < \phis \}, \\
\label{dom:D+}
\mathcal{D}_{\mathrm{x}}^+ &:=& \{ \Phi \in \mathcal{D}_{\Phimax} \,:\, \phi > \phis \},
\end{eqnarray*}
where
%
%
$\mathcal{C}(\Phis) = \mathrm{cl} \bigl( \mathcal{D}_{\mathrm{x}}^- \bigr) \cap \mathrm{cl} \bigl( \mathcal{D}_{\mathrm{x}}^+ \bigr)$
marks the intersection of the closures of those two regions.
The main difference between both domains is the opposite direction of the characteristic speeds on the integral curves.
In $\mathcal{D}_{\mathrm{x}}^-$ the second eigenvalue increases from the edge $\partial^1$ 
to the edge $\partial^2$, 
and, reversely, in $\mathcal{D}_{\mathrm{x}}^+$ the second eigenvalue increases from the edge $\partial^2$ 
to the edge $\partial^1$. 
%
See the orientation of the second rarefactions in Fig.~\ref{fig:curves2x2},
where the arrows point into the directions of increasing eigenvalues.
%
%

A Riemann solution from $O$ to any state $\Phi$ in $\mathcal{D}_{\Phimax}$ generally consists of a 1-Lax shock
followed by a 2-Lax shock.
For $\Phi$ belonging to $\mathcal{D}_{\mathrm{x}}^-$ 
the middle state that intersects the 1-wave with the 2-wave is denoted by $\Phi_-^M$, and for $\Phi$ belonging to $\mathcal{D}_{\mathrm{x}}^+$ the middle state is denoted by $\Phi_+^M$.
In both cases the middle state $\Phi^M$ belongs to the 1-Lax locus of the origin $O$.
Since both waves are shocks any
middle state $\Phi_+^M$ or $\Phi_-^M$ is located at $\mathcal{H}(\Phi) \cup \mathcal{H}(O)$.
It turns out that $\Phi_+^M$ belongs to the edge $\partial^1$ 
and $\Phi_-^M$ belongs to the edge $\partial^2$. 

%
%

For a right state $\Phi \in \mathcal{D}_{\mathrm{x}}^-$ the solution of $RP(O,\,\Phi)$ 
comprises the 1-Lax shock from $O$ to a state $\Phi^M = (0,\,\phi^M)^T$ such that $\phi^M \in (0,\,\phis)$
and the 2-Lax shock from $\Phi^M$ to $\Phi$.
%
Similarly, for a right state $\Phi \in \mathcal{D}_{\mathrm{x}}^+$ the solution of $RP(O,\,\Phi)$ 
comprises the 1-Lax shock from $O$ to a state $\Phi^M = (\phi^M,\,0)^T$ such that $\phi^M \in (\phis,\,\phimax)$
and the 2-Lax shock from $\Phi^M$ to $\Phi$.
Such cases are depicted in Fig.~\ref{fig:shClassification}; the former as $\Phi_- \in \mathcal{D}_{\mathrm{x}}^-$ and the latter as $\Phi_+ \in \mathcal{D}_{\mathrm{x}}^+$.

The solution of the Riemann problem $RP(O,\,\Phi)$ with $\Phi$ on the Hugoniot locus of $O$ 
comprises a single shock from $O$ to $\Phi$ with a classification that depends on the position of $\Phi$ and is elaborated in Theorem \ref{teo:classification}:
\begin{enumerate}
\item For $\Phi = (\phi,\,0)^{\mathrm{T}}$ with $\phi \in (\phisigma,\,\phis)$ the shock is over-compressive,
\item For $\Phi = (0, \, \phi)^{\mathrm{T}}$ with $\phi \in (\phis,\,\phimax)$ the shock is over-compressive,
\item For $\Phi = (\phi,\,0)^{\mathrm{T}}$ with $\phi \in (0,\,\phisigma)$ the shock is 2-Lax, 
\item For any other $\Phi$ in $\mathcal{H}(O)$ the shock is 1-Lax.
%
\end{enumerate}
%

For a state $\Phi \in \mathcal{C}(\Phis)$,  
the solution of the Riemann problem 
$RP(O,\,\Phi)$ consists of a single 1-Lax shock from $O$ to $\Phi$.
The two consecutive shocks from $O$ to $(0,\,\phis)^T$ and from $(0,\,\phis)^{\mathrm{T}}$ to $\Phi$ have both the same speed $v_1(\phis) = v_2(\phis)$,
which in turn is the same speed of  the direct shock from $O$ to $\Phi$.
Since all these shocks have the same speed, the middle state $(0,\,\phis)^{\mathrm{T}}$ is ``invisible'' in the solution profile in the physical space;
this is because of Lemma~\ref{TripleShock}.
%
Therefore, the solution structure for $\Phi \in \mathcal{D}_{\Phi^\infty}$ is represented as
$$O \xrightarrow{\mathrm{1-Lax}} \Phi^M \xrightarrow{\mathrm{2-Lax}} \Phi.$$
For $\Phi \in \mathcal{C}(\Phis)$ 
the middle state $\Phi^M$ may assume any other state on the same contact manifold and even collapse with $\Phi$.

Please refer to Fig.~\ref{fig:shClassification} and notice that as any $\Phi_- \in \mathcal{D}_{\mathrm{x}}^-$
tends to a $\Phi \in \mathcal{C}(\Phis)$ 
the middle state $\Phi_-^M$ tends to $(0,\,\phis)^T$.
Similarly, as $\Phi_+ \in \mathcal{D}_{\mathrm{x}}^+$
tends to a $\Phi  \in \mathcal{C}(\Phis)$, the state $\Phi_+^M$ tends to $(\phis,\,0)^{\mathrm{T}}$.
Thus, we notice a continuous dependence of solutions to the Riemann problem $RP(O,\,\Phi)$ on the right datum.

\subsection{The Riemann problem $RP(\Phi,\,\Phimax)$}

%

In this Section, the dependence of the solution structure on the exponents $n_1$ and $n_2$ 
is illustrated by two examples, Example 1 and Example 2 
with the corresponding parameter setting \eqref{parametersetting1} and \eqref{parametersetting2}, respectively.
%
%
%
%
%
%
%
In the Example 1, the solution consists of a simple 1-wave comprising a shock followed by a rarefaction or a single rarefaction. 
The solution structure of Example 2 depends on the existence of one or two detached inflection curves for the first characteristic family.
As $\mathcal{C}(\Phimax)$ is a contact manifold, the goal is to consider a generic state $\Phi \in \mathcal{D}_{\Phimax}$ 
from which waves reaching any state in $\Phi^\infty \in \partial^\infty$ are constructed.

\ \\
\noindent
{\it Construction for Example 1}.
The construction for Example 1 can be orientated by the integral curves and the inflection manifold in Figure~\ref{fig:curves2x2}.
In Figure~\ref{fig:curves2x2}~(a) the inflection manifold $\mathcal{I}_1$ of the first family is shown as a solid curve,
%
%
which is almost a straight line,
connecting 
the states $\Phiinf_1 = (0.4,\,0)^{\mathrm{T}}$ and $\Phiinf_2 = (0,\,0.5)^{\mathrm{T}}$, compare Lemma~\ref{cla:inflection}.
%
%
Therefore, a wave curve starting at a state on the upper-right hand side of $\mathcal{I}_1$
follows a centered rarefaction of the first family until the maximum packing manifold $\partial^\infty$ is reached. 
The final rarefaction point is $(0,\,1)^{\mathrm{T}}$, unless the starting state is on the $\partial^1$ axis, in such a case, the final rarefaction point is $(1,\,0)^{\mathrm{T}}$.

The characteristic velocities near the contact manifold $\mathcal{C}(\Phimax)$ are close to zero, and the characteristic directions are close to $r_2 = r_1 = (1,\,-1)^{\mathrm{T}}$, see \eqref{ev2}.
%
%
%
%
%
If the left state $\Phi$ is on the lower-left side of the inflection $\mathcal{I}_1$ 
then the wave is obtained by a backward 1-wave construction. 
%
%
%
Namely, all shocks from a state $\Phi$ at the lower-left hand side of $\mathcal{I}_1$ are connected to a state $\Phi^M \in \mathcal{H}(\Phi)$ on the right of $\mathcal{I}_1$ satisfying $\sigma(\Phi,\,\Phi^M) = \lambda_1(\Phi^M)$.
%
Therefore, the solution for such a state $\Phi$ comprises the 1-Lax shock from $\Phi$ to $\Phi^M$ which is characteristic at $\Phi^M$, and the rarefaction curve from $\Phi^M$ to $(0,\,1)^{\mathrm{T}}$, or $(1,\,0)^{\mathrm{T}}$ if $\Phi$ belongs to $\partial^1$.

\ \\
\noindent
{\it Construction of Example 2}. 
The construction of Example 2 is visualized in Figure~\ref{fig:Solutions}.
The integral curves of the first family change their growth direction at two detached parts of the inflection manifold $\mathcal{I}_1$, namely $\mathcal{T}$ and $\mathcal{B}$, see Figure~\ref{fig:Solutions}~(a).
This growth direction change is the key in the distinct structure of the solution of $RP(\Phi,\,\Phimax)$ solutions in both Examples. 

\begin{figure}[htb]
\centering
\includegraphics[width = \textwidth]{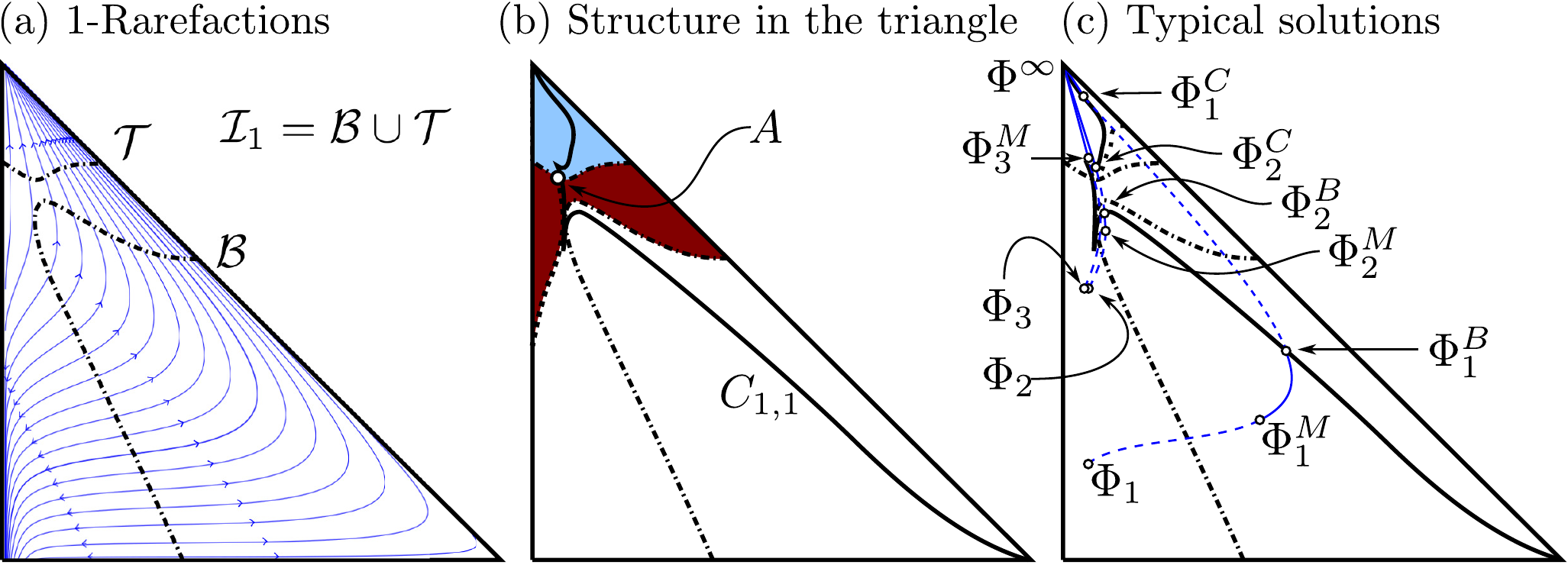}
\caption{Curves of Example 2. In (a) the first family rarefaction curves are plotted as continuous curves, the arrows point into the direction of increasing eigenvalues. The dash-dotted curves are the inflection manifold $\mathcal{I}_1$ splitted into the two branches $\mathcal{T}$ and $\mathcal{B}$. In (b) the solid curve $C_{1,1}$ is the double-contact of the first family (notice the two components, one in the light region and one another in the white region); $A$ is the state where a limit shock curve from $\mathcal{T}$ does not cross $\mathcal{B}$. In the presentation of the wave curve solutions in (c) the continuous curves correspond to rarefaction fans and the dashed curves correspond to shock waves.}
\label{fig:Solutions}
\end{figure}

In Fig.~\ref{fig:Solutions}~(b) the 
light and dark shaded regions represent right states $\Phi$ for which the solutions are similar to that of Example 1:
%
%

\begin{enumerate}
\item When the states belong to the upper corner 
above $\mathcal{T}$ 
inside the light shaded region, then the wave curve comprises 
a single 1-rarefaction
connecting the state $\Phi$ 
moving along increasing eigenvalues $\lambda_1$
towards $\Phimax = (1,\,0)^{\mathrm{T}}$; 
%

%
\item For a left state $\Phi$ in the dark shaded region, a characteristic 1-Lax shock 
to a middle state in the light shaded region crosses $\mathcal{T}$, 
from this middle state a first family rarefaction follows to the maximum package concentration. 
\end{enumerate}

%
%
%
%

The construction of $RP(\Phi,\,\Phimax)$ solutions for $\Phi$ in the white region of Figure~\ref{fig:Solutions}~(b) is outlined in the sequel.
%
%
%
There are three cases of right characteristic 1-Lax shocks that connect a state $\Phi$ 
to a state $\Phi^M \in \mathcal{H}(\Phi)$ such that $\sigma(\Phi,\,\Phi^M) = \lambda_1(\Phi^M)$ : 

(1) The shock curve crosses once $\mathcal{B}$;

(2) The shock curve crosses once $\mathcal{T}$;

(3) The shock curve crosses twice $\mathcal{B}$ and once $\mathcal{T}$.

\noindent

In Figure~\ref{fig:Solutions}~(c) the left states of the cases (1) are represented by $\Phi_1$ and $\Phi_2$, the case (2) is represented by point 2 above and, case (3) is represented by $\Phi_3$.
The wave curves starting from $\Phi_1$, $\Phi_2$ and $\Phi_3$ have the following structure:
\begin{eqnarray}
\label{struct1}
\Phi_1 \xrightarrow{\mathrm{1-Lax}} \big|\Phi_1^M \xrightarrow{\mathrm{1-rar}} \Phi_1^B\big| \xrightarrow{\mathrm{1-Lax}} \big|\Phi_1^C \xrightarrow{\mathrm{1-rar}} \Phimax, \\
\label{struct2}
\Phi_2 \xrightarrow{\mathrm{1-Lax}} \big|\Phi_2^M \xrightarrow{\mathrm{1-rar}} \Phi_2^B\big| \xrightarrow{\mathrm{1-Lax}} \big|\Phi_2^C \xrightarrow{\mathrm{1-rar}} \Phimax, \\
\label{struct3}
\Phi_3 \xrightarrow{\mathrm{1-Lax}} \big|\Phi_3^M \xrightarrow{\mathrm{1-rar}} \Phimax,
\end{eqnarray}
where the symbol $\big|$ indicates where a shock is characteristic.
For case (3) the construction of the shock curves proceeds as before, 
namely a 1-Lax shock followed by a 1-rarefaction connecting a middle state $\Phi^M$ in the light shade region in Fig.~\ref{fig:Solutions}~(b) with $\Phi^\infty$. 
%
For case (i), with $i = 1$ or $2$, the characteristic shock $(\Phi_i,\,\Phi_i^M)$ precedes a 1-rarefaction from $\Phi_i^M$ towards $\mathcal{B}$ at a state $\Phi_i^B$ on $C_{1,1}$,
from there another 1-Lax left and right characteristic shock connects to a state $\Phi_i^C$ on the other side of $C_{1,1}$.
From $\Phi_i^C$ the wave curve is terminated by the 1-rarefaction to $\Phimax$. 


If the state $\Phi_2$ is approximated to a state $\Phi_3$ then a continuous change in the solution profile is observed
such that the wave group \eqref{struct2} collapses to the wave group \eqref{struct3}.
Indeed, if $\Phi_2$ comes closer to $\Phi_3$, then $\Phi_2^M$ comes closer to $\Phi_2^B$ and $\Phi_2^C$ to $\Phi_3^M$ in such a way that the shock speeds $\sigma(\Phi_2,\,\Phi_2^M)$ and $\sigma(\Phi_2^B,\,\Phi_2^C)$ approximate $\sigma(\Phi_3,\,\Phi_3^M)$, until, in the limit, the 1-rarefaction wave from $\Phi_2^M$ to $\Phi_2^B$ disappears.



\section*{Acknowledgments}
The authors are grateful to~Dan Marchesin for allow us to
use the {\tt RPn} package that the Fluid Dynamics Laboratory at
IMPA is developing.
The first author is supported by Conicyt (Chile) through Fondecyt
project \#~1120587. 
The second author was partially supported by FAPERJ (Brazil) through the
fellowship grant E-26/102.474/2010. 




\begin{thebibliography}{99}

\bibitem{ampz02capillary}
{\sc A.~Azevedo, D.~Marchesin, B.~Plohr and K.~Zumbrun} (2002)
{``Capillary instability in models for three-phase flow''},
{\it Zeitschrift f\"ur Angewandte Mathematik und Physik (ZAMP)} {\bf 53}: 713--746.

\bibitem{asfmp10}
{\sc A.V.~Azevedo, A.J.~de~Souza, F.~Furtado, D.~Marchesin and B.~Plohr} (2010)
{``The solution by the wave curve method of three-phase flow in virgin reservoirs''},
{\it Transp. Porous Media} {\bf 83}: 99--125.

\bibitem{bbb}
{\sc D.K.~Basson, S.~Berres and R.~B{\"u}rger} (2009)
{``On models of polydisperse sedimentation with particle-size-specific hindered-settling factors''},
{\it Appl. Math. Modelling} {\bf 33}: 1815--1835.

\bibitem{bgc03}
{\sc S. Benzoni-Gavage and R. M. Colombo} (2003) 
{``An n-populations model for traffic flow''},
{\it Eur. J. Appl. Math.} {\bf 14}: 587--612.


\bibitem{bbrp}
{\sc S. Berres and R.~B\"{u}rger} (2007)
{``On Riemann problems and front tracking for a model of sedimentation of polydisperse suspensions''},
{\it ZAMM} {\bf 87}: 665--691.


\bibitem{bbacta}
{\sc S. Berres and R.~B\"{u}rger} (2008)
{``On the settling of a bidisperse suspension with particles having different sizes and densities''},
{\it Acta Mecanica} {\bf 201}: 47--62.


\bibitem{bbkt}
{\sc S.~Berres, R.~B\"{u}rger, K.H.~Karlsen and E.M.~Tory} (2003)
{``Strongly degenerate parabolic-hyperbolic systems modeling polydisperse sedimentation with compression''},
{\it SIAM J.\ Appl.\ Math.} {\bf 64}: 41--80.


\bibitem{nhm11}
{\sc S. Berres, R. Ruiz-Baier, H. Schwandt and Elmer Tory} (2011)
{``An adaptive finite-volume method for a model of two-phase pedestrian flow''},
{\it Netw. Het. Media} {\bf 6}: 401--423.


\bibitem{bv09}
{\sc S.~Berres and T.~Voitovich} (2009)
{``On the spectrum of a rank two modification of a diagonal matrix for lin-earized fluxes modelling polydisperse sedimentation''}.
In Tadmor, Eitan (ed.) {\it et al.}, {\it Hyperbolic problems. Theory, numerics and applications. Contributed talks. Proceedings of Hyp2008. American Mathematical Society (AMS). Proceedings of Symposia in Applied Mathematics} {\bf 67(2)}: 409--418.




\bibitem{biesheuvel00}
{\sc P.M.~Biesheuvel} (2000)
{``Particle segregation during pressure filtration for cast formation''},
{\it Chem. Eng. Sci.}Ê{\bf 55}: 2595--2606.






\bibitem{bdmv}
{\sc R.~B\"{u}rger, R.~Donat, P.~Mulet and C.A.~Vega} (2010)
{``Hyperbolicity analysis of polydisperse sedimentation models via a secular equation for the flux Jacobian''},
{\it SIAM Journal of Applied Mathematics} {\bf 70}: 2186--2213.

\bibitem{bktw}
{\sc R.~B\"{u}rger, K.H.~Karlsen, E.M.~Tory and W.L.~Wendland} (2002)
{``Model equations and instability regions for the sedimentation of polydisperse suspensions of spheres''},
{\it ZAMM Z.\ Angew.\ Math.\ Mech.} {\bf 82}: 699--722.

\bibitem{HYPcfm}
{\sc P.~Casta\~neda, F.~Furtado and D.~Marchesin} (2014) 
{``On singular points for convex permeability models''},
{\it Hyperbolic Problems: Theory, Numerics, Applications, Proc. of Hyp2012, eds. F. Ancona, A. Bressan, P. Marcati, and A. Marson, AIMS Series on Appl. Math.} {\bf 8}: 415--422.


\bibitem{cfm}
{\sc P.~Casta\~neda, E.~Abreu, F.~Furtado and D.~Marchesin}.
``On a universal structure for immiscible three-phase flow in virgin reservoirs''. 
({\it Submitted.})


\bibitem{dafermos}  
{\sc C.M.~Dafermos} (2000) 
{``Hyperbolic Conservation Laws in Continuum Physics''},
{\it Springer Verlag, Berlin}.

        
\bibitem{Cido}
{\sc A.~de Souza} (1992)
{``Stability of singular fundamental solutions under perturbations for flow in porous media''},
{\it Mat.~Aplic.~Comp.} {\bf 11}: 73--115.

\bibitem{rosa}
{\sc R.~Donat and P.~Mulet} (2010)
{``A secular equation for the Jacobian matrix of certain multi-species kinematic flow models''},
{\it Numer. Methods Partial Differential Equations} {\bf 26}: 159--175.

\bibitem{fayers}
{\sc F.J.~Fayers} (1989) 
{``Extension of Stone's method 1 and conditions for real characteristic three-phase flow''},
{\it SPE Reservoir Engineering}, 437--445.

\bibitem{furtado}
{\sc F.~Furtado } 
(1989) {\it Structural Stability of Nonlinear Waves for Conservation Laws}, PhD Thesis, NYU.

\bibitem{impt88}
{\sc E.~Isaacson, D.~Marchesin, B.~Plohr and B.~Temple} (1988)
{``The Riemann problem near a hyperbolic singularity: the classification of solutions of quadratic Riemann problems''},
{\it SIAM J. Appl. Math.} {\bf 48}: 1009--1032.

\bibitem{impt92}
{\sc E.~Isaacson, D.~Marchesin, B.~Plohr and B.~Temple} (1992)
{``Multiphase flow models with singular Riemann problems''},
{\it Mat. Apl. Comput.} {\bf 11}: 147--166.

%

\bibitem{Lax57}
{\sc P.~Lax} (1957)
{``Hyperbolic systems of conservation laws {II}''},
{\it Commun. Pure Appl. Math.} {\bf 10}: 537--566.

%
%

\bibitem{majda}
{\sc A.~Majda and R.L.~Pego} (1985)
{``Stable viscosity matrices for systems of conservation laws''},
{\it J.~Differential Equations} {\bf 56}: 229--262.

\bibitem{masl}
{\sc J.H.~Masliyah} (1979)
{``Hindered settling in a multiple-species particle system''},
{\it Chem.\ Eng.\ Sci.} {\bf 34}: 1166--1168.

\bibitem{Matos}
{\sc V.~Matos, P.~Casta\~neda and D.~Marchesin} (2014)
{``Classification of the umbilic point in immiscible three-phase flow in porous media''},
{\it Hyperbolic Problems: Theory, Numerics, Applications, Proc. of Hyp2012, eds. F. Ancona, A. Bressan, P. Marcati, and A. Marson, AIMS Series on Appl. Math.} {\bf 8}: 791--799.





\bibitem{panters} 
{\sc P.~Rodr\'{\i}guez-Berm\'udez and D.~Marchesin} (2013)
{``Riemann solutions for vertical flow of three phases in porous media: simple cases''},
{\it J.~Hyperbolic Differ. Equ.} {\bf 10(2)}: 335--370.


\bibitem{pantersHYP}
{\sc P.~Rodr\'{\i}guez-Berm\'udez and D.~Marchesin} (2014)
{``Loss of strict hyperbolicity for vertical three-phase flow in porous media''},
{\it Hyperbolic Problems: Theory, Numerics, Applications, Proc. of Hyp2012, eds. F. Ancona, A. Bressan, P. Marcati, and A. Marson, AIMS Series on Appl. Math.} {\bf 8}: 881--888.


\bibitem{rz}   
{\sc J.F. Richardson, W.N. Zaki} (1954)
 {``Sedimentation and fluidization: Part I.''},
{\it Trans. Inst. Chem. Engrs. (London)} {\bf 32}, 35--53.


\bibitem{ss87}
{\sc D.G.~Schaeffer and M.~Shearer} (1987)
{``The classification of 2 x 2 systems of non-strictly hyperbolic conservation laws with application to oil recovery''},
{\it Comm. Pure and Appl. Math.} {\bf 40}: 141--178.

\bibitem{smp96}
{\sc S.~Schecter, D.~Marchesin and B.~Plohr} (1996)
{``Structurally stable Riemann solutions''},
{\it J.~Differential Equations} {\bf 126}: 303--354.

\bibitem{spm01}
{\sc S.~Schecter, B.J.~Plohr, D.~Marchesin} (2001)
{``Classification of codimension-one Riemann solution''},
{\it J. Dyn. Diff. Equ.} {\bf 13}: 523--588.

\bibitem{Tem83}
{\sc B.~Temple} (1983)
{``Systems of conservation laws with invariant submanifolds''},
{\it Trans. Amer. Math. Soc.} {\bf 280}: 781--795.


\end{thebibliography}
\end{document}